\numberwithin{equation}{section}
\newtheorem{theorem}{Theorem}
\newtheorem{prop}{Proposition}
\newtheorem{lemma}{Lemma}
\theoremstyle{definition}
\newcommand{\R}{\mathbb{R}}
\newcommand{\N}{\mathbb{N}}
\newcommand{\Z}{\mathbb{Z}}
\newcommand{\p}{\partial}
\newcommand{\eps}{\varepsilon}
\begin{document}

\begin{center}
\Large{\textbf{
Strong instability of standing waves \\
for nonlinear Schr\"{o}dinger equations \\
with a partial confinement}}
\end{center}

\vspace{2mm}


\vspace{3mm}

\begin{center}
{\large Masahito Ohta} 
\end{center}
\begin{center}
Department of Mathematics, 
Tokyo University of Science, \\
1-3 Kagurazaka, Shinjuku-ku, Tokyo 162-8601, Japan
\end{center}

\begin{abstract}
We study the instability of standing wave solutions 
for nonlinear Schr\"{o}dinger equations 
with a one-dimensional harmonic potential in dimension $N\ge 2$. 
We prove that 
if the nonlinearity is $L^2$-critical or supercritical in dimension $N-1$, 
then any ground states are strongly unstable by blowup. 
\end{abstract}

\section{Introduction}

In this paper, we study the instability of standing wave solutions 
$e^{i\omega t}\phi_{\omega}(x)$ 
for the nonlinear Schr\"{o}dinger equation 
with a one-dimensional harmonic potential 
\begin{align}\label{nls}
i\p_t u=-\Delta u+x_N^2 u-|u|^{p-1}u,
\quad (t,x)\in \R\times \R^N, 
\end{align}
where $N\ge 2$, 
$x_N$ is the $N$-th component of $x=(x_1,...,x_N)\in \R^N$, 
$\Delta$ is the Laplacian in $x$, 
and $1<p<1+4/(N-2)$. 
Here, $1+4/(N-2)$ stands for $\infty$ if $N=2$. 

The Cauchy problem for \eqref{nls} is locally well-posed in the energy space $X$
(see \cite[Theorem 9.2.6]{caz}). 
Here, the energy space $X$ for \eqref{nls} is defined by 
$$X=\{v\in H^1(\R^N):  x_N v \in L^2 (\R^N)\}$$
with the norm 
$$\|v\|_X=\left( \|\nabla v\|_{L^2}^2+\|v\|_{L^2}^2+\|x_Nv\|_{L^2}^2 \right)^{1/2}.$$

\begin{prop}
Let $1<p<1+4/(N-2)$. 
For any $u_0\in X$ there exist $T_{\max}=T_{\max}(u_0)\in (0,\infty]$ 
and a unique maximal solution $u\in C([0,T_{\max}), X)\cap C^1([0,T_{\max}), X^*)$ 
of \eqref{nls} with initial condition $u(0)=u_0$. 
The solution $u(t)$ is maximal in the sense that 
if $T_{\max}<\infty$, 
then $\displaystyle{\|u(t)\|_{X}\to \infty}$ as 
$t\nearrow T_{\max}$. 

Moreover, the solution $u(t)$ satisfies the conservation laws 
\begin{align}\label{conservation}
\|u(t)\|_{L^2}^2=\|u_0\|_{L^2}^2, \quad E(u(t))=E(u_0)
\end{align}
for all $t\in [0,T_{\max})$, 
where the energy $E$ is defined by 
\begin{align*}
E(v)=\frac{1}{2}\|\nabla v\|_{L^2}^2
+\frac{1}{2}\|x_N v\|_{L^2}^2
-\frac{1}{p+1}\|v\|_{L^{p+1}}^{p+1}. 
\end{align*}
\end{prop}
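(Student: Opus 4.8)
The plan is to run the standard Kato--Cazenave fixed-point scheme in Strichartz spaces; the only ingredient that is not entirely routine is a Strichartz estimate for the linear propagator generated by the partially confining Hamiltonian. Write $x=(x',x_N)$ with $x'=(x_1,\dots,x_{N-1})$ and set $H:=-\Delta+x_N^2$. Then $H$ is non-negative and essentially self-adjoint on $C_c^\infty(\R^N)$, it generates a unitary group $\{e^{-itH}\}_{t\in\R}$ on $L^2(\R^N)$, and its form domain $D(H^{1/2})$ coincides with $X$ with $\|v\|_X^2\simeq\|v\|_{L^2}^2+\langle Hv,v\rangle$. Since $H$ splits as the sum of $-\Delta_{x'}$ acting in $x'$ and the one-dimensional harmonic oscillator $H_1:=-\p_{x_N}^2+x_N^2$ acting in $x_N$, one has $e^{-itH}=e^{it\Delta_{x'}}\otimes e^{-itH_1}$. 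Mehler's formula for $e^{-itH_1}$ together with the free kernel in $\R^{N-1}$ gives, for $0<|t|<\pi/2$ (the interval on which the Mehler kernel is non-degenerate), the dispersive bound $\|e^{-itH}f\|_{L^\infty(\R^N)}\lesssim |t|^{-N/2}\|f\|_{L^1(\R^N)}$ for small $|t|$, with a bounded kernel away from $t=0$; combined with the $L^2$-isometry and the usual $TT^*$ argument this yields the full family of Strichartz estimates for $e^{-itH}$ on any bounded time interval, with the same admissible exponents as for the free Schr\"odinger group on $\R^N$.

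I would then rewrite \eqref{nls} as the Duhamel equation
\[
u(t)=e^{-itH}u_0+i\int_0^t e^{-i(t-s)H}\big(|u(s)|^{p-1}u(s)\big)\,ds
\]
and solve it by contraction on a ball of $C([0,T];X)\cap L^q((0,T);W^{1,r}(\R^N))$ for a suitable $L^2$-admissible pair $(q,r)$ and $T=T(\|u_0\|_X)>0$ small. Since $1<p<1+4/(N-2)$ the nonlinearity is $H^1$-subcritical, so once the Strichartz bounds are available the $H^1$-level estimates are standard. To propagate the weight I use that $[x_N,H]=2\p_{x_N}$ and $[\p_{x_j},H]=2\delta_{jN}x_N$ both map $X$ into $L^2$; hence applying $\nabla$ and multiplication by $x_N$ to the Duhamel formula produces only terms already controlled in the Strichartz norm built on $X$, and the fixed point closes directly in the $X$-valued space. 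Uniqueness and the fact that the existence time depends only on $\|u_0\|_X$ come from the same estimates, and the blow-up alternative is the usual continuation argument: if $T_{\max}<\infty$ while $\|u(t_n)\|_X$ stayed bounded along some sequence $t_n\nearrow T_{\max}$, one could restart the solution at $t_n$ and extend it past $T_{\max}$, a contradiction.

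For the conservation laws I would first prove them for regularized data, say $u_0\in D(H)$ with $x_N^2u_0\in L^2$, for which the solution has enough regularity that $u\in C^1$ with values in $L^2$ and the pairings $\langle i\p_t u,u\rangle$ and $\langle i\p_t u,\p_t u\rangle$ make classical sense; substituting the equation then gives $\frac{d}{dt}\|u(t)\|_{L^2}^2=0$ and $\frac{d}{dt}E(u(t))=0$. Continuous dependence in $X$ --- itself a by-product of the contraction estimates --- then transfers both identities to arbitrary $u_0\in X$. The step I expect to be the main obstacle is the dispersive estimate for $e^{-itH}$: one must keep track of the degeneracy of the Mehler kernel at $t\in\frac{\pi}{2}\Z$, so that only \emph{local-in-time} Strichartz estimates hold --- which is, however, precisely what the fixed-point argument requires. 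Everything else is a routine adaptation of the free-space theory, and in fact the whole statement is contained in \cite[Theorem 9.2.6]{caz}.
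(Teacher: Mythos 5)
Your outline is correct and matches the route the paper actually takes: the paper does not prove this proposition but simply invokes \cite[Theorem 9.2.6]{caz}, whose proof is precisely what you sketch --- the Mehler-kernel dispersive bound for $e^{-itH}$ with $H=-\Delta+x_N^2$ (degenerate at $t\in\frac{\pi}{2}\Z$, hence only local-in-time Strichartz estimates), a contraction in Strichartz spaces built on $X$, and regularization plus continuous dependence for the conservation laws. The one point to phrase more carefully is that $\nabla$ and $x_N$ do not commute with $e^{-itH}$ individually, so rather than literally applying them to the Duhamel formula one runs the contraction for the conjugated observables $e^{-itH}\nabla e^{itH}$ and $e^{-itH}x_N e^{itH}$, which on bounded time intervals are bounded linear combinations of $\nabla$ and $x_N$ precisely because of the closed commutator relations you record; this is the standard device in the cited proof and your argument is otherwise complete.
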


Next, we consider the stationary problem 
\begin{align}\label{sp}
-\Delta \phi+x_N^2 \phi+\omega \phi-|\phi|^{p-1}\phi=0, 
\quad x\in \R^N,
\end{align}
where $\omega\in \R$. 
Note that 
if $\phi(x)$ solves \eqref{sp}, 
then $e^{i\omega t} \phi(x)$ is a solution of \eqref{nls}. 
Moreover, \eqref{sp} can be written as $S_{\omega}'(\phi)=0$, 
where 
\begin{align*}
S_{\omega}(v)
&=E(v)+\frac{\omega}{2}\|v\|_{L^2}^2 \\
&=\frac{1}{2}\|\nabla v\|_{L^2}^2
+\frac{1}{2}\|x_N v\|_{L^2}^2
+\frac{\omega}{2}\|v\|_{L^2}^2
-\frac{1}{p+1}\|v\|_{L^{p+1}}^{p+1}
\end{align*}
is the action. 
The set of all ground states for \eqref{sp} is defined by 
\begin{equation} \label{GS1}
\mathcal{G}_{\omega}
=\{\phi \in \mathcal{A}_{\omega}: S_{\omega}(\phi)\le S_{\omega}(v)
\hspace{2mm} \mbox{for all} \hspace{2mm}
v\in \mathcal{A}_{\omega}\},
\end{equation}
where 
\begin{equation*} 
\mathcal{A}_{\omega}
=\{v \in X: S_{\omega}'(v)=0,~v\ne 0\}
\end{equation*}
is the set of all nontrivial solutions for \eqref{sp}. 

Then, we have the following result on the existence of ground states for \eqref{sp}. 

\begin{prop} \label{prop-GS}
Let $1<p<1+4/(N-2)$ and $\omega\in (-1,\infty)$. 
Then, the set $\mathcal{G}_{\omega}$ is not empty, and it is characterized by 
\begin{equation} \label{GS2}
\mathcal{G}_{\omega}
=\{v\in X: S_{\omega}(v)=d(\omega),~ K_{\omega}(v)=0,~v\ne 0\},
\end{equation}
where 
\begin{align*}
K_{\omega}(v)
=\p_{\lambda} S_{\omega} (\lambda v)|_{\lambda=1} 
=\|\nabla v\|_{L^2}^2
+\|x_N v\|_{L^2}^2
+\omega \|v\|_{L^2}^2
-\|v\|_{L^{p+1}}^{p+1}
\end{align*}
is the Nehari functional, and 
\begin{equation} \label{GS3}
d(\omega)=\inf\{S_{\omega}(v):
v\in X,~ K_{\omega}(v)=0,~ v\ne 0\}. 
\end{equation}
\end{prop}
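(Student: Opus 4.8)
The plan is to carry out the usual Nehari-manifold program — coercivity, a compactness argument producing a minimizer of \eqref{GS3}, identification of minimizers with solutions of \eqref{sp}, and the two inclusions in \eqref{GS2} — paying attention to the fact that the confinement acts only in the direction $x_N$.

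First I would record the coercivity that makes everything work. Writing $A(v)=\|\nabla v\|_{L^2}^2+\|x_N v\|_{L^2}^2+\omega\|v\|_{L^2}^2$ and using the elementary one-dimensional inequality $\|\partial_{x_N}v\|_{L^2}^2+\|x_Nv\|_{L^2}^2\ge\|v\|_{L^2}^2$ (the bottom of the spectrum of $-\partial_{x_N}^2+x_N^2$ is $1$), one splits off a fraction $\eps\in(\max\{0,-\omega\},1)$ of the harmonic part to obtain $A(v)\ge\|\nabla' v\|_{L^2}^2+(1-\eps)\bigl(\|\partial_{x_N}v\|_{L^2}^2+\|x_Nv\|_{L^2}^2\bigr)+(\eps+\omega)\|v\|_{L^2}^2\ge c\|v\|_X^2$ for some $c>0$ when $\omega>-1$; trivially $A(v)\le\|v\|_X^2$, so $A$ is an equivalent norm on $X$. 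Since $X\hookrightarrow H^1(\R^N)\hookrightarrow L^{p+1}(\R^N)$ (subcritical because $p<1+4/(N-2)$), on the set $\{K_\omega(v)=0,\ v\ne0\}$ we get $c\|v\|_X^2\le A(v)=\|v\|_{L^{p+1}}^{p+1}\le C\|v\|_X^{p+1}$, hence $\|v\|_X\ge\delta>0$; and since there $S_\omega(v)=S_\omega(v)-\tfrac1{p+1}K_\omega(v)=\tfrac{p-1}{2(p+1)}A(v)$, this already yields $d(\omega)\ge\tfrac{p-1}{2(p+1)}c\delta^2>0$ and the coercivity of $S_\omega$ on the Nehari set.

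For existence of a minimizer I would first reduce \eqref{GS3} to a best-constant problem: the homogeneities of $A$ and of $\|\cdot\|_{L^{p+1}}^{p+1}$ give $d(\omega)=\tfrac{p-1}{2(p+1)}\,m^{(p+1)/(p-1)}$ with $m:=\inf\{A(v):v\in X,\ \|v\|_{L^{p+1}}=1\}$, and a minimizer $\phi$ of $m$ becomes, after multiplication by the constant $\lambda_*$ with $\lambda_*^{p-1}=m$, a minimizer of \eqref{GS3}. Taking a minimizing sequence $\{v_n\}$ for $m$ — bounded in $X$ by the coercivity above — I would apply Lions' concentration–compactness to the densities $|v_n|^{p+1}$, but only with respect to translations in $x'=(x_1,\dots,x_{N-1})$: this restriction is dictated by the partial confinement, since the $x_N^2$-term makes $A$ blow up under any escape in $x_N$ and therefore a priori forbids concentration along $x_N$ (in the subcritical regime, concentration of $L^{p+1}$-mass keeps $L^2$-mass bounded below, and then $\|x_N v_n\|_{L^2}$ could not stay bounded). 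Vanishing is excluded because a sequence bounded in $H^1(\R^N)$ that vanishes tends to $0$ in $L^{p+1}$, contradicting the constraint; dichotomy is excluded because $m_\lambda:=\inf\{A(v):\|v\|_{L^{p+1}}^{p+1}=\lambda\}=\lambda^{2/(p+1)}m_1$ is strictly subadditive in $\lambda$ (as $2/(p+1)<1$). Hence, up to an $x'$-translation and a subsequence, $v_n\rightharpoonup\phi$ weakly in $X$ and strongly in $L^{p+1}(\R^N)$, so $\|\phi\|_{L^{p+1}}=1$ and $A(\phi)\le\liminf A(v_n)=m$ by weak lower semicontinuity, whence $A(\phi)=m$. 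Rescaling produces $\psi\in X\setminus\{0\}$ with $K_\omega(\psi)=0$ and $S_\omega(\psi)=d(\omega)$.

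It remains to identify minimizers of \eqref{GS3} with solutions of \eqref{sp} and to prove \eqref{GS2}. For any minimizer $v$ of \eqref{GS3}, $\langle K_\omega'(v),v\rangle=2A(v)-(p+1)\|v\|_{L^{p+1}}^{p+1}=-(p-1)A(v)<0$ (using $K_\omega(v)=0$ and $A(v)>0$), so the Nehari set is a $C^1$-manifold near $v$ and the Lagrange multiplier rule gives $\mu$ with $S_\omega'(v)=\mu K_\omega'(v)$; pairing with $v$ gives $0=K_\omega(v)=\mu\langle K_\omega'(v),v\rangle=-\mu(p-1)A(v)$, forcing $\mu=0$ and hence $S_\omega'(v)=0$, i.e. $v\in\mathcal{A}_\omega$. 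Conversely every $w\in\mathcal{A}_\omega$ has $K_\omega(w)=\langle S_\omega'(w),w\rangle=0$, so $w$ is admissible in \eqref{GS3} and $S_\omega(w)\ge d(\omega)=S_\omega(\psi)$; this shows at once that $\psi$ — and, by the Lagrange argument, every $v$ with $S_\omega(v)=d(\omega),\ K_\omega(v)=0,\ v\ne0$ — lies in $\mathcal{G}_\omega$, so $\mathcal{G}_\omega\ne\emptyset$ and the right-hand side of \eqref{GS2} is contained in $\mathcal{G}_\omega$. For the reverse inclusion, if $\phi\in\mathcal{G}_\omega$ then $\phi\in\mathcal{A}_\omega$ gives $K_\omega(\phi)=0$ and hence $S_\omega(\phi)\ge d(\omega)$, while the ground-state property gives $S_\omega(\phi)\le S_\omega(\psi)=d(\omega)$; thus $S_\omega(\phi)=d(\omega)$ and $K_\omega(\phi)=0$, which proves \eqref{GS2}. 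I expect the compactness step to be the main obstacle: the embedding $X\hookrightarrow L^{p+1}(\R^N)$ fails to be compact because of the $x'$-translation invariance, so the argument genuinely needs concentration–compactness, and one must be careful that the harmonic confinement operates in only one of the $N$ coordinate directions.
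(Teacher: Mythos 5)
Your proposal is correct in substance and reaches the same conclusion, but the key compactness step is handled by a genuinely different (and heavier) route than the paper's. You reduce \eqref{GS3} to the best-constant problem $m=\inf\{A(v):\|v\|_{L^{p+1}}=1\}$ and run the full Lions trichotomy on the densities $|v_n|^{p+1}$, ruling out vanishing by the standard $H^1$-boundedness argument and dichotomy by the strict subadditivity $\lambda\mapsto\lambda^{2/(p+1)}m_1$, and you correctly observe that the translations can be confined to the $x'$-variables because any escape of concentrated $L^{p+1}$-mass in $x_N$ would force $\|x_N v_n\|_{L^2}\to\infty$ (this last point does need the small Gagliardo--Nirenberg argument showing concentrated $L^{p+1}$-mass carries a fixed amount of $L^2$-mass, which you only sketch). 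The paper instead avoids the trichotomy entirely: it uses a Lieb-type lemma (a single $x'$-translation, found by a pigeonhole over unit slabs $Q_y$ together with the compact embedding $X(Q_0)\hookrightarrow L^{p+1}(Q_0)$, producing a nonzero weak limit), and then upgrades the weak limit to a minimizer using the Brezis--Lieb lemma for $K_\omega$ combined with the Nehari-manifold observation that $K_\omega(v)<0$ implies $d(\omega)<J_\omega(v)$; no strong $L^{p+1}$-convergence of the minimizing sequence is ever needed. Your route yields the stronger conclusion of precompactness of the minimizing sequence up to $x'$-translations, at the cost of the dichotomy bookkeeping; the paper's route is shorter and exploits the homogeneity of the Nehari constraint more directly. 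The Lagrange-multiplier identification of minimizers with critical points and the two inclusions proving \eqref{GS2} are essentially identical in both arguments. (One trivial slip: $A(v)\le\|v\|_X^2$ fails for $\omega>1$; you of course only need $A(v)\le C_2(\omega)\|v\|_X^2$.)
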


Although Proposition \ref{prop-GS} can be proved 
by the standard concentration compactness argument, 
for the sake of completeness, 
we give the proof of Proposition \ref{prop-GS} in Section 3. 

Here, we remark that by Heisenberg's inequality
\begin{equation} \label{He0}
\|v\|_{L^2}^2\le 2\|\p_{N}v\|_{L^2} \|x_Nv\|_{L^2},
\end{equation}
for any $\omega\in (-1,\infty)$ 
there exist positive constants $C_1(\omega)$ and $C_2(\omega)$ such that 
\begin{equation} \label{He1}
C_1(\omega)  \|v\|_X^2
\le \|\nabla v\|_{L^2}^2+\|x_N v\|_{L^2}^2+\omega \|v\|_{L^2}^2
\le C_2(\omega) \|v\|_X^2
\end{equation}
for all $v\in X$. 

Now we state our main result in this paper. 

\begin{theorem}\label{thm1}
Assume that $N\ge 2$, $1+4/(N-1)\le p<1+4/(N-2)$, 
and let $\phi_{\omega}\in \mathcal{G}_{\omega}$ for $\omega\in (-1,\infty)$. 
Then, for any $\omega\in (-1,\infty)$, 
the standing wave solution $e^{i\omega t}\phi_{\omega}$ of \eqref{nls} 
is strongly unstable in the following sense. 
For any $\eps>0$ there exists $u_0\in X$ such that 
$\|u_0-\phi_{\omega}\|_X<\eps$ and 
the solution $u(t)$ of \eqref{nls} with $u(0)=u_0$ blows up in finite time. 
\end{theorem}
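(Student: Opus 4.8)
The plan is to follow the classical Berestycki–Cazenave scheme adapted to the partial confinement. The key point is to exploit the anisotropy: since the potential $x_N^2$ acts only in the last variable, the natural virial-type quantity involves only the first $N-1$ variables. Write $x=(x',x_N)$ with $x'\in\R^{N-1}$, and for $\phi_\omega\in\mathcal G_\omega$ consider the scaling $\phi_\omega^\lambda(x):=\lambda^{(N-1)/2}\phi_\omega(\lambda x',x_N)$, which preserves the $L^2$ norm and leaves $\|x_N\phi_\omega^\lambda\|_{L^2}$ unchanged. First I would compute $S_\omega(\phi_\omega^\lambda)$ as a function of $\lambda$; writing $f(\lambda):=S_\omega(\phi_\omega^\lambda)$, one finds $f(\lambda)=\tfrac{\lambda^2}{2}\|\nabla'\phi_\omega\|_{L^2}^2+(\text{$\lambda$-independent part})-\tfrac{\lambda^{(N-1)(p-1)/2}}{p+1}\|\phi_\omega\|_{L^{p+1}}^{p+1}$. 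The hypothesis $p\ge 1+4/(N-1)$ is exactly the condition $(N-1)(p-1)/2\ge 2$, which makes the nonlinear term decay at least as fast as $\lambda^2$ as $\lambda$ grows, so that $f$ is concave-type for $\lambda\ge 1$ and, crucially, $\lambda\partial_\lambda S_\omega(\phi_\omega^\lambda)$ dominates a multiple of $S_\omega(\phi_\omega^\lambda)$ from below — this is the mechanism producing the strict energy drop.

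Next I would establish the two structural facts that make the scheme work. First, a characterization of the ground state energy via a second minimization: introduce $Q(v):=\|\nabla' v\|_{L^2}^2-\tfrac{(N-1)(p-1)}{2(p+1)}\|v\|_{L^{p+1}}^{p+1}$ (so $Q(v)=\partial_\lambda S_\omega(v^\lambda)|_{\lambda=1}$), and show that
\begin{equation*}
d(\omega)=\inf\{S_\omega(v): v\in X,\ v\ne 0,\ K_\omega(v)\le 0,\ Q(v)\le 0\}
\end{equation*}
using Proposition \ref{prop-GS}, the fact that along the scaling the function $\lambda\mapsto K_\omega(\phi_\omega^\lambda)$ changes sign, and the subcriticality assumption to keep minimizing sequences under control. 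Second, I would derive the virial identity: for $u_0\in X$ with $|x'|u_0\in L^2$, the quantity $V(t):=\int_{\R^N}|x'|^2|u(t,x)|^2\,dx$ is $C^2$ and satisfies $V''(t)=8Q(u(t))$ — note the potential term $x_N^2$ contributes nothing because $x'$ and $x_N$ are independent variables, which is precisely why only the partial confinement is needed. Then the standard argument gives $V''(t)=8Q(u(t))\le 16\,E(u(t)) - c\,\|u(t)\|_X^2$-type bounds, or more directly $V''(t)\le 8(S_\omega(u(t))-d(\omega))\cdot(\text{const})<0$ once we know the initial datum lies in the relevant region with $S_\omega(u_0)<d(\omega)$.

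The construction of the blowup datum then runs as follows: given $\eps>0$, take $u_0=\phi_\omega^{\lambda}$ for $\lambda>1$ close to $1$. Since $\phi_\omega^\lambda\to\phi_\omega$ in $X$ as $\lambda\to1$, the proximity condition $\|u_0-\phi_\omega\|_X<\eps$ holds for $\lambda$ near $1$. One checks that for $\lambda>1$: (i) $S_\omega(\phi_\omega^\lambda)<S_\omega(\phi_\omega)=d(\omega)$ (strict energy drop, from the computation of $f$ above together with $f'(1)=Q(\phi_\omega)=0$ and $f''(1)<0$); (ii) $Q(\phi_\omega^\lambda)<0$; (iii) $K_\omega(\phi_\omega^\lambda)<0$. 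Properties (ii)–(iii) are preserved along the flow: if $Q(u(t_0))\ge0$ at some first time, then $u(t_0)$ would lie in the minimization set of the displayed formula and hence $S_\omega(u(t_0))\ge d(\omega)$, contradicting conservation of $S_\omega$ and (i). Therefore $Q(u(t))<0$ for all $t\in[0,T_{\max})$, and in fact $Q(u(t))\le 8(S_\omega(u(t))-d(\omega))=8(S_\omega(u_0)-d(\omega))=:-\delta<0$ uniformly, whence $V''(t)\le -8\delta$, forcing $V(t)\to 0$ in finite time and therefore $T_{\max}<\infty$ by the standard convexity argument combined with $V(t)\ge0$ and the uncertainty principle $\|u(t)\|_{L^2}^2\lesssim \|x' u(t)\|_{L^2}\,\|\nabla' u(t)\|_{L^2}$.

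The main obstacle I anticipate is making the variational characterization of $d(\omega)$ with the two constraints $K_\omega\le0$, $Q\le0$ rigorous and checking it is consistent with $\mathcal G_\omega$ — in particular verifying that the infimum over the enlarged constraint set is still attained at ground states and equals $d(\omega)$, and that along the scaling path one genuinely reaches the region $\{K_\omega<0,\ Q<0\}$ while strictly lowering the action. A secondary technical point is the truncation argument needed to justify the virial identity rigorously (regularizing $|x'|^2$ by $|x'|^2\psi(|x'|/R)$ and passing to the limit), which is routine but must be done since a priori one only knows $u(t)\in X$, not $|x'|u(t)\in L^2$; this is handled by first establishing the identity for smooth enough data and then approximating, or by working directly with the truncated functional and absorbing error terms.
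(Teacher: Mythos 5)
Your overall architecture (a virial identity in the first $N-1$ variables only, an invariant set defined by an energy bound plus a sign condition on the virial functional, data obtained by scaling the ground state) is the right one and matches the paper's. But there are two genuine gaps.

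First, the variational characterization you propose as the key structural fact,
\begin{equation*}
d(\omega)=\inf\{S_\omega(v): v\in X,\ v\ne 0,\ K_\omega(v)\le 0,\ Q(v)\le 0\},
\end{equation*}
is false: taking $v=t\phi_\omega$ with $t\to\infty$ gives $K_\omega(t\phi_\omega)\to-\infty$, $Q(t\phi_\omega)\to-\infty$ and $S_\omega(t\phi_\omega)\to-\infty$, so the infimum over that set is $-\infty$. Consequently the invariance argument you build on it is not justified as stated, and neither is the uniform bound $Q(u(t))\le c\,(S_\omega(u_0)-d(\omega))$, which you assert but never derive; note that mere negativity of $Q(u(t))$ is not enough to close the convexity argument. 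The correct replacement (and the actual novelty of the paper, following Zhang and Le Coz) is the inequality: if $P(v)\le 0$ and $v\ne 0$ then $d(\omega)\le S_\omega(v)-P(v)$, where $P=\tfrac12 Q$. This is proved by rescaling $v\mapsto v^{\lambda}$ until $K_\omega(v^{\lambda_0})=0$ (possible because the $\lambda$-independent part $\|\p_N v\|_{L^2}^2+\|x_Nv\|_{L^2}^2+\omega\|v\|_{L^2}^2$ is strictly positive for $\omega>-1$ by Heisenberg's inequality) and observing that $\lambda\mapsto S_\omega(v^\lambda)-\lambda^2P(v)$ is maximized at $\lambda=1$ when $\alpha\ge2$. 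This single inequality gives both the flow invariance of $\{S_\omega<d(\omega),\,P<0\}$ and the uniform bound $P(u(t))\le S_\omega(u_0)-d(\omega)<0$ in one stroke.

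Second, your choice of initial data $u_0=\phi_\omega^\lambda$ (the anisotropic dilation) fails precisely at the borderline exponent $p=1+4/(N-1)$, i.e. $\alpha=(N-1)(p-1)/2=2$, which the theorem includes and which contains the physically relevant case $N=3$, $p=3$. Since $Q(\phi_\omega)=0$, one has $f'(\lambda)=\frac{\alpha}{p+1}\|\phi_\omega\|_{L^{p+1}}^{p+1}(\lambda-\lambda^{\alpha-1})$, which vanishes identically when $\alpha=2$: then $S_\omega(\phi_\omega^\lambda)=d(\omega)$ and $Q(\phi_\omega^\lambda)=\lambda^2Q(\phi_\omega)=0$ for every $\lambda$, so the dilated data never enters the blowup region and there is no strict energy drop. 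The paper avoids this by taking $u_0=\lambda\phi_\omega$ (amplitude scaling) for $\lambda>1$: because $S_\omega'(\phi_\omega)=0$ and the nonlinear term scales like $\lambda^{p+1}$ with $p+1>2$, one always gets $S_\omega(\lambda\phi_\omega)<d(\omega)$ and $P(\lambda\phi_\omega)<0$ for $\lambda>1$, uniformly in the range $1+4/(N-1)\le p<1+4/(N-2)$. Your truncation remarks on the virial identity and the observation that the potential $x_N^2$ does not contribute are fine and agree with the paper.
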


Notice that Theorem \ref{thm1} covers the physically relevant case 
$N=3$ and $p=3$ as a borderline case. 

Here, we recall some known results related to Theorem \ref{thm1}. 
First, we consider the nonlinear Schr\"{o}dinger equations without potential
\begin{align}\label{nls0}
i\p_t u=-\Delta u-|u|^{p-1}u,
\quad (t,x)\in \R\times \R^N, 
\end{align}
where $1<p<1+4/(N-2)$. 
For any $\omega\in (0,\infty)$, 
there exists a unique positive radial solution $\phi_{\omega}(x)$ of 
the stationary problem 
\begin{align*}
-\Delta \phi+\omega \phi-|\phi|^{p-1}\phi=0, 
\quad x\in \R^N
\end{align*}
(see \cite{Kwong} for the uniqueness). 
When $1<p<1+4/N$, 
the standing wave solution $e^{i\omega t}\phi_{\omega}$ of \eqref{nls0} is 
orbitally stable for all $\omega>0$ (see \cite{CL}). 
While, if $1+4/N\le p<1+4/(N-2)$, 
then 
the standing wave solution $e^{i\omega t}\phi_{\omega}$ of \eqref{nls0} is 
strongly unstable for all $\omega>0$ 
(see \cite{BC} and also \cite[Theorem 8.2.2]{caz}). 

Next, we consider the nonlinear Schr\"{o}dinger equations 
with a harmonic potential
\begin{align}\label{nls1}
i\p_t u=-\Delta u+|x|^2u-|u|^{p-1}u,
\quad (t,x)\in \R\times \R^N, 
\end{align}
where $1<p<1+4/(N-2)$. 
For any $\omega\in (-N,\infty)$, 
there exists a unique positive radial solution $\phi_{\omega}(x)$ of 
the stationary problem 
\begin{align*}
-\Delta \phi+|x|^2 \phi+\omega \phi-|\phi|^{p-1}\phi=0, 
\quad x\in \R^N
\end{align*}
(see \cite{HO1,HO2} for the uniqueness). 

When $\omega$ is sufficiently close to $-N$, 
the standing wave solution $e^{i\omega t} \phi_{\omega}$ of \eqref{nls1} is orbitally stable 
for any $1<p<1+4/(N-2)$ (see \cite{FO1}). 
We remark that $N$ is the first eigenvalue of $-\Delta+|x|^2$. 

On the other hand, when $\omega$ is sufficiently large, 
the standing wave solution $e^{i\omega t} \phi_{\omega}$ of \eqref{nls1} is 
orbitally stable for the case $1<p\le 1+4/N$ (see \cite{fuk05,FO1}), 
and it is strongly unstable for the case $1+4/N<p<1+4/(N-2)$ 
(see \cite{oht16} and also \cite{FO2} for an earlier result on the orbital instability). 

Finally, we consider the nonlinear Schr\"{o}dinger equations 
with a partial confinement of the form 
\begin{align}\label{nls2}
i\p_t u=-\Delta u+(x_1^2+\cdots+x_d^2) u-|u|^{p-1}u,
\quad (t,x)\in \R\times \R^N, 
\end{align}
where $N\ge 2$, $1\le d\le N-1$, 
$x=(x_1,...,x_d,x_{d+1},...,x_N)$. 
The typical case is that $N=3$ and $d=2$. 
Recently, 
Bellazzini, Boussa\"id, Jeanjean and Visciglia \cite{BBJV} 
constructed orbitally stable standing wave solutions of \eqref{nls2} 
for the case 
\begin{equation} \label{UBp}
1+4/N<p<\min\{1+4/(N-d),~ 1+4/(N-2)\}
\end{equation}
(see Theorem 1 and Remark 1.9 of \cite{BBJV}). 
It should be remarked that 
the bottom of the spectrum 
of $-\Delta+(x_1^2+\cdots+x_d^2)$ is not an eigenvalue, 
so that unlike \eqref{nls1} with a complete confinement, 
the existence of stable standing wave solutions for \eqref{nls2} 
is highly nontrivial in the $L^2$-supercritical case $p>1+4/N$. 

We also remark that for the case $d\ge 2$, 
the assumption \eqref{UBp} becomes 
$1+4/N<p<1+4/(N-2)$. 
On the other hand, for the case $d=1$, 
the assumption \eqref{UBp} becomes 
$1+4/N<p<1+4/(N-1)$, 
and there is a chance to consider the case 
$1+4/(N-1)\le p<1+4/(N-2)$. 
This is our main motivation for Theorem \ref{thm1} in the present paper
(see also \cite{ACS, CG, TTV} for related results). 

Although it is not clear whether the standing wave solutions 
constructed by \cite{BBJV} are ground states 
in the sense of \eqref{GS1}
(see Definition 1.1 and Remark 1.10 of \cite{BBJV}), 
it would be safe to conclude from  our Theorem \ref{thm1}  that 
the upper bound on $p$ in \eqref{UBp} is optimal 
for the existence of stable standing wave solutions of \eqref{nls2}. 

The rest of the paper is organized as follows. 
In Section 2, we give the proof of Theorem \ref{thm1}. 
The proof is based on a virial type identity \eqref{virial} 
associated with the scaling \eqref{scale}, 
the characterization of ground states \eqref{GS2} 
by the minimization problem on the Nehari manifold, 
and Lemma \ref{lem1} below. 
We remark that 
the classical method by Berestycki and Cazenave \cite{BC} 
is not applicable to \eqref{nls} directly. 
Instead, we use and modify the ideas of Zhang \cite{zhang02} and Le Coz \cite{lec}, 
which give an alternative approach to the strong instability 
(see also \cite{oht16, OY1, OY2} for recent developments). 

In Section 3, we give the proof of Proposition \ref{prop-GS}. 
The proof is based on the standard concentration compactness argument. 

\section{Proof of Theorem \ref{thm1}}

We define 
\begin{align*}
\Sigma=\{v\in H^1(\R^N): |x|v\in L^2(\R^N)\}. 
\end{align*}

First, we derive a virial type identity. 

\begin{prop} \label{prop-V}
Let $1<p<1+4/(N-2)$. 
If $u_0\in \Sigma$, 
then the solution $u(t)$ of \eqref{nls} with $u(0)=u_0$ satisfies 
$u\in C([0,T_{\max}),\Sigma)$. 
Moreover, the function 
\begin{align*}
t\mapsto 
F(t)=\sum_{j=1}^{N-1} \int_{\R^N} x_j^2 |u(t,x)|^2\,dx
\end{align*}
is in $C^2[0,T_{\max})$, 
and  satisfies 
\begin{equation} \label{virial}
F''(t)=16 P(u(t))
\end{equation}
for all $t\in [0,T_{\max})$,
where 
\begin{align*}
P(v)
=\frac{1}{2} \sum_{j=1}^{N-1} \|\p_j v\|_{L^2}^2
-\frac{\alpha}{2(p+1)}\|v\|_{L^{p+1}}^{p+1}, \quad 
\alpha=\dfrac{(N-1)(p-1)}{2}. 
\end{align*}
\end{prop}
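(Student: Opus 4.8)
The plan is to establish the virial identity \eqref{virial} by the standard regularization-and-differentiation scheme, adapted to the fact that only the first $N-1$ spatial variables are weighted (the $x_N$-direction is already controlled by the harmonic potential and does not enter $F$). First I would address the propagation of the $\Sigma$-regularity: given $u_0\in\Sigma$, one has $u_0\in X$ as well (since $|x|u_0\in L^2$ controls $x_Nu_0\in L^2$), so the local solution from Proposition~1 exists in $X$, and a standard approximation argument — mollify $u_0$, solve, and pass to the limit using the local well-posedness in $\Sigma$ for NLS with a quadratic potential (cf.\ \cite[Chapter 9]{caz}) — gives $u\in C([0,T_{\max}),\Sigma)$. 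This is the technical backbone that makes the formal computations below rigorous.

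Next I would compute $F'(t)$ and $F''(t)$. Writing $F(t)=\sum_{j=1}^{N-1}\int x_j^2|u|^2\,dx$ and using the equation \eqref{nls}, a direct computation (first for smooth decaying data, then by density) gives
\begin{align*}
F'(t)=2\,\mathrm{Im}\sum_{j=1}^{N-1}\int_{\R^N} \overline{x_j^2 u}\,\p_t u\,dx
=4\,\mathrm{Im}\sum_{j=1}^{N-1}\int_{\R^N} x_j\,\overline{u}\,\p_j u\,dx.
\end{align*}
The key point is that the potential term $x_N^2u$ contributes nothing to $F'$, because $x_j$ with $j\le N-1$ commutes with multiplication by $x_N^2$ and the resulting integrand is real. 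Differentiating once more in $t$, again using \eqref{nls}, and integrating by parts, the potential term $x_N^2$ and the $\p_N^2$ part of the Laplacian drop out of the $j\le N-1$ sum (they commute with $x_j\p_j$ in the relevant bilinear pairing), leaving exactly
\begin{align*}
F''(t)=8\sum_{j=1}^{N-1}\|\p_j u\|_{L^2}^2
-\frac{4(N-1)(p-1)}{p+1}\|u\|_{L^{p+1}}^{p+1}=16P(u(t)),
\end{align*}
where the nonlinear contribution comes from the identity $\sum_{j=1}^{N-1}\mathrm{Re}\int x_j\,\p_j(|u|^{p-1}u)\,\overline{u}\,dx$ evaluated via integration by parts, producing the factor $\alpha=(N-1)(p-1)/2$. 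The continuity of $F''$ follows from $u\in C([0,T_{\max}),\Sigma)\hookrightarrow C([0,T_{\max}),H^1)$ and the continuity of the maps $v\mapsto\|\p_jv\|_{L^2}^2$ and $v\mapsto\|v\|_{L^{p+1}}^{p+1}$ on $H^1(\R^N)$ (the latter by Sobolev embedding, valid since $p<1+4/(N-2)$).

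The main obstacle is the rigorous justification of the two differentiations under the integral sign when $u$ is only an $H^1$-strength solution: one cannot naively multiply \eqref{nls} by $x_j^2\overline{u}$ since $x_j^2u$ need not lie in $X^*$-pairing range in a way that makes the computation legitimate. The standard remedy, which I would follow, is to introduce a cutoff $\chi_R(x)=\chi(x/R)$ truncating the weight, carry out the computations for $F_R(t)=\sum_{j\le N-1}\int \chi_R(x)x_j^2|u|^2$ where everything is justified by the $H^1$-theory, obtain uniform-in-$R$ bounds using $u\in C([0,T_{\max}),\Sigma)$, and then let $R\to\infty$; alternatively one mollifies the initial data as above and uses the $\Sigma$-persistence. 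Either route is routine but must be done with care so that no boundary terms survive in the limit — this is where the bulk of the technical work lies, while the algebraic structure of \eqref{virial} itself is a direct consequence of the scaling \eqref{scale}.
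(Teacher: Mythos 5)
Your proposal is correct and follows essentially the same route as the paper: a formal virial computation whose rigorous justification is delegated to the standard regularization argument (the paper simply cites \cite[Proposition 6.5.1]{caz} and \cite{mar} for this step). The only cosmetic differences are that the paper organizes the computation of $F''$ through the partial scaling \eqref{scale} and the identity $P(v)=\tfrac{1}{2}\,\p_{\lambda}E(v^{\lambda})|_{\lambda=1}$ rather than by direct integration by parts, and that your first displayed expression for $F'(t)$ should read $2\,\mathrm{Re}\sum_j\int x_j^2\overline{u}\,\p_t u\,dx$ (equivalently $2\,\mathrm{Im}$ of the pairing with $i\p_t u$, as in the paper), although the final formula you derive from it is the correct one.
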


\begin{proof}
We state formal calculations for the identity \eqref{virial} only. 
These formal calculations can be justified by the classical regularization argument 
as in \cite[Proposition 6.5.1]{caz} (see also \cite{mar}). 

Let $u(t,x)$ be a smooth solution of \eqref{nls}. 
Then, we have 
\begin{align*}
&F'(t)
=2 \sum_{j=1}^{N-1} \, {\rm Im} \int_{\R^N} 
x_j^2 \overline{u} 
\left(-\Delta u+x_N^2 u-|u|^{p-1}u \right)\,dx \\
&=-2 \sum_{j=1}^{N-1} \, {\rm Im} \int_{\R^N} 
x_j^2 \overline{u} \Delta u \,dx 
=4 \sum_{j=1}^{N-1} \, {\rm Im} \int_{\R^N} \overline{u} x_j \p_j u\,dx. 
\end{align*}

Moreover, we have 
\begin{align*}
F''(t)
=-4 \sum_{j=1}^{N-1} \, {\rm Im} \int_{\R^N} 
\p_t u 
\left( 2x_j \p_j \overline{u}+\overline{u} \right)\,dx. 
\end{align*}

Here, we consider the scaling 
\begin{equation} \label{scale}
v^{\lambda}(x)=\lambda^{(N-1)/2} v(\lambda x_1,...,\lambda x_{N-1},x_N)
\end{equation}
for $\lambda>0$ and $x=(x_1,...,x_{N-1},x_N)\in \R^N$.  
Then, we have 
\begin{align*}
&\p_{\lambda} v^{\lambda}(x)|_{\lambda=1}
=\sum_{j=1}^{N-1} x_j \p_j v(x)
+\frac{N-1}{2}v(x), \\
&E(v^{\lambda})
=\frac{\lambda^2}{2} \sum_{j=1}^{N-1} \|\p_j v\|_{L^2}^2
-\frac{\lambda^{\alpha}}{p+1} \|v\|_{L^{p+1}}^{p+1}
+\frac{1}{2} \|\p_N v\|_{L^2}^2+\frac{1}{2}\|x_N v\|_{L^2}^2, 
\end{align*}
and 
\begin{align*}
P(v)=\dfrac{1}{2}\, \p_{\lambda} E(v^{\lambda}) |_{\lambda=1}. 
\end{align*}

Thus, we have 
\begin{align*}
F''(t)
&=8 \,{\rm Re} \int_{\R^N} 
\left( -\Delta u+x_N^2u-|u|^{p-1}u \right)
\overline{\p_{\lambda} u^{\lambda} |_{\lambda=1}} \,dx \\
&=8 \p_{\lambda} E(u^{\lambda}) |_{\lambda=1}
=16 P(u(t)). 
\end{align*}

As stated above, 
these formal calculations can be justified by the regularization argument. 
\end{proof}

Notice that 
\begin{align*}
\alpha=\dfrac{(N-1)(p-1)}{2} \ge 2
\end{align*}
for the case $1+4/(N-1)\le p<1+4/(N-2)$. 

The following lemma is a modification of the ideas of Zhang \cite{zhang02} and Le Coz \cite{lec} 
(see also \cite{oht16, OY1, OY2}). 

\begin{lemma} \label{lem1}
Assume that $1+4/(N-1)\le p<1+4/(N-2)$ and $\omega\in (-1,\infty)$. 
If $v\in X$ satisfies $P(v)\le 0$ and $v\ne 0$, 
then $d(\omega)\le S_{\omega}(v)-P(v)$. 
\end{lemma}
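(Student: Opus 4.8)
The plan is to use the scaling \eqref{scale} to move $v$ onto the Nehari manifold, and then exploit the characterization $d(\omega)=\inf\{S_\omega(w):K_\omega(w)=0,w\ne0\}$ from \eqref{GS2}--\eqref{GS3}. Concretely, for $\lambda>0$ set $v^\lambda$ as in \eqref{scale} and consider the function $j(\lambda)=S_\omega(v^\lambda)$. Using the computations already recorded in the proof of Proposition \ref{prop-V}, one has
\begin{align*}
S_\omega(v^\lambda)=\frac{\lambda^2}{2}\sum_{j=1}^{N-1}\|\p_j v\|_{L^2}^2-\frac{\lambda^\alpha}{p+1}\|v\|_{L^{p+1}}^{p+1}+\frac12\|\p_N v\|_{L^2}^2+\frac12\|x_N v\|_{L^2}^2+\frac{\omega}{2}\|v\|_{L^2}^2,
\end{align*}
and correspondingly $\lambda\p_\lambda S_\omega(v^\lambda)=\lambda^2\sum_{j=1}^{N-1}\|\p_j v\|_{L^2}^2-\frac{\alpha\lambda^\alpha}{p+1}\|v\|_{L^{p+1}}^{p+1}$, which one recognizes as a ``partial Nehari'' quantity. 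The key structural input is $\alpha\ge2$ (noted in the excerpt right after Proposition \ref{prop-V}), so the exponent $\alpha$ in the nonlinear term dominates the exponent $2$ in the kinetic term.

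First I would record the two auxiliary functionals that appear: write $A=\sum_{j=1}^{N-1}\|\p_j v\|_{L^2}^2$ and $B=\frac{1}{p+1}\|v\|_{L^{p+1}}^{p+1}$, so that $P(v)=\tfrac12 A-\tfrac{\alpha}{2}B\le0$, i.e. $A\le\alpha B$. Next I would compute $S_\omega(v)-P(v)$ explicitly and $K_\omega(v^\lambda)$ explicitly, and choose $\lambda_0\in(0,1]$ so that $K_\omega(v^{\lambda_0})=0$; since $K_\omega(v^\lambda)=\lambda^2 A+(\text{terms independent of }\lambda)-\lambda^\alpha(p+1)B$ and $\alpha>2$, for small $\lambda$ this is positive (the $\omega$-mass-gradient part is coercive by \eqref{He1}, hence positive) while it must go to $-\infty$ or at least become $\le0$ at some point — here one uses $P(v)\le0$ together with $v\ne0$ to guarantee $B>0$ and hence that a zero $\lambda_0\le1$ exists. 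Then by \eqref{GS3}, $d(\omega)\le S_\omega(v^{\lambda_0})$, and it remains to show $S_\omega(v^{\lambda_0})\le S_\omega(v)-P(v)$.

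For that last inequality I would study $g(\lambda):=S_\omega(v^\lambda)-\lambda^2 P(v)$ (or an equivalent one-variable function), whose derivative, using the formulas above, can be written in terms of $\lambda^2 A$, $\lambda^\alpha B$ and $\lambda^2 A,\lambda^2 B$; the point is that $g'(\lambda)$ has a sign determined by $(\alpha\lambda^{\alpha-2}-\alpha)B$ type expressions together with $P(v)\le0$, and since $\alpha\ge2$ and $\lambda_0\le1$ one gets monotonicity of $g$ on $(0,1]$ in the right direction, yielding $S_\omega(v^{\lambda_0})-\lambda_0^2 P(v)\le S_\omega(v)-P(v)$; combined with $P(v)\le0$ and $\lambda_0^2\le1$ this gives $S_\omega(v^{\lambda_0})\le S_\omega(v)-P(v)$. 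The main obstacle I anticipate is bookkeeping: getting the dependence on $\lambda$ of $K_\omega(v^\lambda)$ and of $S_\omega(v^\lambda)-\lambda^2 P(v)$ exactly right so that the two facts ``$\lambda_0\le1$'' and ``$g$ monotone on $(0,1]$'' come out simultaneously, and handling cleanly the degenerate situation $A=0$ (where $P(v)\le0$ forces $B>0$ still, and one scales a pure nonlinear term). The inequality $\alpha\ge2$ is exactly what makes all the competing powers line up, so I would make sure it is invoked at each sign check rather than only at the end.
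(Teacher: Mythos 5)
Your overall strategy is the paper's: rescale $v\mapsto v^{\lambda}$ as in \eqref{scale} to reach the Nehari manifold, invoke \eqref{GS3}, and compare through the auxiliary function $g(\lambda)=S_{\omega}(v^{\lambda})-\lambda^{2}P(v)$. However, there is one genuine gap: you assert that a zero $\lambda_0$ of $\lambda\mapsto K_{\omega}(v^{\lambda})$ can be found in $(0,1]$, and your closing step uses monotonicity of $g$ only on $(0,1]$. The hypothesis is $P(v)\le 0$, not $K_{\omega}(v)\le 0$. Writing $A=\sum_{j=1}^{N-1}\|\p_j v\|_{L^2}^2$ and $C_0=\|\p_N v\|_{L^2}^2+\|x_N v\|_{L^2}^2+\omega\|v\|_{L^2}^2>0$, one has $K_{\omega}(v^{\lambda})=\lambda^{2}A-\lambda^{\alpha}\|v\|_{L^{p+1}}^{p+1}+C_0$, and the scaling-invariant term $C_0$ is not controlled by $P(v)\le 0$ in terms of $\|v\|_{L^{p+1}}^{p+1}$: for instance, replacing $v$ by $v(x)e^{ikx_N}$ leaves $P$ and $A$ unchanged but makes $C_0$, hence $K_{\omega}(v)$, as large as you like. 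When $K_{\omega}(v)>0$, the inequality $\lambda^{2}A-\lambda^{\alpha}\|v\|_{L^{p+1}}^{p+1}\ge \lambda^{\alpha}\bigl(A-\|v\|_{L^{p+1}}^{p+1}\bigr)$ valid for $\lambda\in(0,1]$ and $\alpha\ge 2$ shows $K_{\omega}(v^{\lambda})>0$ on all of $(0,1]$, so every zero satisfies $\lambda_0>1$ and your argument as written does not cover it. (Existence of some zero in $(0,\infty)$ is fine: $P(v)\le 0$ gives $K_{\omega}(v^{\lambda})\le\bigl(\tfrac{\alpha\lambda^{2}}{p+1}-\lambda^{\alpha}\bigr)\|v\|_{L^{p+1}}^{p+1}+C_0\to-\infty$, including the borderline $\alpha=2$ where the bracket is $-\tfrac{p-1}{p+1}\lambda^{2}$.)

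The repair is exactly the computation you deferred as ``bookkeeping'', and it is what the paper does: the $A$-terms cancel in $g$, leaving
\begin{align*}
g(\lambda)=S_{\omega}(v^{\lambda})-\lambda^{2}P(v)
=\frac{\alpha\lambda^{2}-2\lambda^{\alpha}}{2(p+1)}\|v\|_{L^{p+1}}^{p+1}+\frac{C_0}{2},
\end{align*}
and since $\alpha\ge 2$ the function $\lambda\mapsto\alpha\lambda^{2}-2\lambda^{\alpha}$ attains its \emph{global} maximum on $(0,\infty)$ at $\lambda=1$ (it is constant when $\alpha=2$). Hence $g(\lambda_0)\le g(1)$ with no constraint on the location of $\lambda_0$, and the chain
$d(\omega)\le S_{\omega}(v^{\lambda_0})\le S_{\omega}(v^{\lambda_0})-\lambda_0^{2}P(v)=g(\lambda_0)\le g(1)=S_{\omega}(v)-P(v)$
closes the proof; note the second inequality needs only $P(v)\le 0$, not $\lambda_0\le 1$. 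With this one correction your argument coincides with the paper's.
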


\begin{proof}
Since $\omega>-1$ and $v\ne 0$, 
by Heisenberg's inequality \eqref{He0}, 
we have 
\begin{align*}
C_0:=\|\p_N v\|_{L^2}^2 +\|x_N v\|_{L^2}^2+\omega \|v\|_{L^2}^2 
\ge (\omega+1)\|v\|_{L^2}^2>0. 
\end{align*}

Then, it follows from $P(v)\le 0$ that 
\begin{align*}
K_{\omega}(v^{\lambda})
&=\lambda^2 \sum_{j=1}^{N-1} \|\p_j v\|_{L^2}^2
-\lambda^{\alpha} \|v\|_{L^{p+1}}^{p+1}+C_0 \\
&\le \left(\frac{\alpha \lambda^2}{p+1}-\lambda^{\alpha}\right)
 \|v\|_{L^{p+1}}^{p+1}+C_0
\end{align*}
for $\lambda>0$. 
Since $\alpha\ge 2$ and $v\ne 0$, 
there exists $\lambda_0\in (0,\infty)$ such that 
$K_{\omega}(v^{\lambda_0})=0$. 
Here, we remark that 
\begin{align*}
\frac{\alpha \lambda^2}{p+1}-\lambda^{\alpha}
=-\frac{p-1}{p+1} \lambda^2  
\end{align*}
for the case $\alpha=2$. 

Then, 
by the definition \eqref{GS3} of $d(\omega)$, 
we have $d(\omega)\le S_{\omega} (v^{\lambda_0})$. 

Moreover, since $\alpha\ge 2$, the function 
\begin{align*}
(0,\infty)\ni \lambda\mapsto 
S_{\omega}(v^{\lambda})-\lambda^2 P(v)
=\frac{\alpha \lambda^2-2 \lambda^{\alpha}}{2(p+1)} \|v\|_{L^{p+1}}^{p+1}
+\frac{C_0}{2}
\end{align*}
attains its maximum at $\lambda=1$. 

Thus,  since $P(v)\le 0$ again, we have 
\begin{align*}
d(\omega)\le S_{\omega} (v^{\lambda_0})
\le S_{\omega} (v^{\lambda_0})-\lambda_0^2 P(v)
\le S_{\omega}(v)-P(v).
\end{align*}

This completes the proof. 
\end{proof}

Once we have obtained Lemma \ref{lem1}, 
the rest of the proof is the same as in the classical argument of 
Berestycki and Cazenave \cite{BC}. 

\begin{lemma}\label{lem2}
Assume that $1+4/(N-1)\le p<1+4/(N-2)$ and $\omega\in (-1,\infty)$. 
The set 
\begin{align*}
\mathcal{B}_{\omega}
=\{ v\in X: S_{\omega}(v)<d(\omega),~ P(v)<0 \}
\end{align*} 
is invariant under the flow of \eqref{nls}. 
That is, if $u_0 \in \mathcal{B}_{\omega}$, 
then the solution $u(t)$ of \eqref{nls} with $u(0)=u_0$ satisfies 
$u(t)\in \mathcal{B}_{\omega}$ for all $t\in [0,T_{\max})$. 
\end{lemma}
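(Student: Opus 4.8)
The plan is to prove invariance of $\mathcal{B}_\omega$ by a standard continuity/connectedness argument, using the conservation laws and Lemma~\ref{lem1}. Fix $u_0\in\mathcal{B}_\omega$ and let $u(t)$ be the corresponding solution on $[0,T_{\max})$. First I would observe that the condition $S_\omega(u(t))<d(\omega)$ is preserved for all time: indeed, $S_\omega(v)=E(v)+\tfrac{\omega}{2}\|v\|_{L^2}^2$, and both $E(u(t))$ and $\|u(t)\|_{L^2}^2$ are conserved by \eqref{conservation}, so $S_\omega(u(t))=S_\omega(u_0)<d(\omega)$ for all $t\in[0,T_{\max})$. Thus the only thing that could fail is the sign condition $P(u(t))<0$.

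Next I would argue that $P(u(t))<0$ cannot break down. Suppose for contradiction that there is a first time $t_0\in(0,T_{\max})$ with $P(u(t_0))=0$; by continuity of $t\mapsto P(u(t))$ (which follows from $u\in C([0,T_{\max}),X)$ together with the continuity of each term of $P$ on $X$, using the Sobolev embedding $X\hookrightarrow H^1\hookrightarrow L^{p+1}$ in the admissible range of $p$), and since $P(u(t))<0$ for $t<t_0$, such a first crossing value is $0$. Now I distinguish two cases. If $u(t_0)=0$, then $S_\omega(u(t_0))=0$; but $d(\omega)>0$ (this is part of Proposition~\ref{prop-GS}, since $\mathcal{G}_\omega\ne\emptyset$ and ground states have positive action, or can be read off directly from \eqref{He1} and the definition of $d(\omega)$), contradicting $S_\omega(u(t_0))<d(\omega)$ — wait, $0<d(\omega)$ is consistent with $S_\omega(u(t_0))<d(\omega)$; so instead I should note that $u(t_0)=0$ forces $\|u_0\|_{L^2}^2=\|u(t_0)\|_{L^2}^2=0$, hence $u_0=0$, contradicting $u_0\in\mathcal{B}_\omega$ (which requires $S_\omega(u_0)<d(\omega)$ but more importantly $P(u_0)<0$, impossible for $u_0=0$ since $P(0)=0$). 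If instead $u(t_0)\ne 0$, then $P(u(t_0))=0\le 0$ and $u(t_0)\ne 0$, so Lemma~\ref{lem1} gives $d(\omega)\le S_\omega(u(t_0))-P(u(t_0))=S_\omega(u(t_0))<d(\omega)$, a contradiction. Hence no such $t_0$ exists, and $P(u(t))<0$ for all $t\in[0,T_{\max})$.

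Combining the two parts, $u(t)\in\mathcal{B}_\omega$ for all $t\in[0,T_{\max})$, which is the claim. The main technical point to be careful about is the continuity of $t\mapsto P(u(t))$ on $[0,T_{\max})$: this requires knowing that the map $v\mapsto\|v\|_{L^{p+1}}^{p+1}$ is continuous on $X$, which follows from the continuous embedding $X\hookrightarrow H^1(\R^N)\hookrightarrow L^{p+1}(\R^N)$ valid precisely because $1<p<1+4/(N-2)$, together with $u\in C([0,T_{\max}),X)$ from Proposition~1. The rest is a routine bootstrap using conservation of mass and energy and Lemma~\ref{lem1}; I do not anticipate a genuine obstacle beyond organizing the contradiction cleanly.
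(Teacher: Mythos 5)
Your argument is correct and is exactly the one the paper intends: the paper's proof is a one-line appeal to the conservation laws \eqref{conservation}, Lemma \ref{lem1}, and the continuity of $t\mapsto P(u(t))$, which is precisely the first-crossing contradiction you spell out (including the observation that $u(t_0)=0$ is excluded because $P(u_0)<0$ forces $u_0\ne 0$ and mass is conserved). No gaps; your version is just a fully written-out form of the paper's proof.
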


\begin{proof}
This follows from the conservation laws \eqref{conservation}, Lemma \ref{lem1}, 
and the continuity of the function $t\mapsto P(u(t))$. 
\end{proof}

\begin{theorem}\label{thm2}
Assume that $1+4/(N-1)\le p<1+4/(N-2)$ and $\omega\in (-1,\infty)$. 
If $u_0\in \mathcal{B}_{\omega}\cap \Sigma$, 
then the solution $u(t)$ of \eqref{nls} with $u(0)=u_0$ blows up in finite time. 
\end{theorem}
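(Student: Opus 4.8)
The plan is to combine the virial identity \eqref{virial}, the invariance of $\mathcal{B}_\omega$ from Lemma \ref{lem2}, and a lower bound on $-P(u(t))$ coming from Lemma \ref{lem1} to force $F''(t)$ to stay negative and bounded away from zero, so that the nonnegative quantity $F(t)$ becomes negative in finite time --- a contradiction with the global existence of $u$. First I would take $u_0\in\mathcal{B}_\omega\cap\Sigma$. By Proposition \ref{prop-V} we have $u\in C([0,T_{\max}),\Sigma)$ and $F\in C^2[0,T_{\max})$ with $F''(t)=16P(u(t))$. By Lemma \ref{lem2}, $u(t)\in\mathcal{B}_\omega$ for all $t$, so in particular $S_\omega(u(t))<d(\omega)$ and $P(u(t))<0$ for all $t\in[0,T_{\max})$.

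\textbf{The key estimate.} The crucial step is to upgrade $P(u(t))<0$ to $P(u(t))\le -\delta<0$ for a fixed $\delta>0$. Since $P(u(t))<0$, Lemma \ref{lem1} applies with $v=u(t)$, giving $d(\omega)\le S_\omega(u(t))-P(u(t))$. Using the conservation law $S_\omega(u(t))=S_\omega(u_0)$ (which follows from \eqref{conservation}), this rearranges to
\begin{align*}
P(u(t))\le S_\omega(u_0)-d(\omega)=:-\delta,
\end{align*}
where $\delta>0$ precisely because $u_0\in\mathcal{B}_\omega$ means $S_\omega(u_0)<d(\omega)$. Hence $F''(t)=16P(u(t))\le -16\delta<0$ for all $t\in[0,T_{\max})$.

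\textbf{Conclusion via the concavity argument.} Integrating $F''(t)\le -16\delta$ twice from $0$ gives $F(t)\le F(0)+F'(0)t-8\delta t^2$ for all $t\in[0,T_{\max})$. The right-hand side is a downward parabola in $t$, so it becomes negative for $t$ large. On the other hand $F(t)=\sum_{j=1}^{N-1}\int_{\R^N}x_j^2|u(t,x)|^2\,dx\ge 0$ by definition. If $T_{\max}=\infty$, this is an immediate contradiction once $t$ is large enough. Therefore $T_{\max}<\infty$, i.e. the solution blows up in finite time, which is the assertion of Theorem \ref{thm2}.

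\textbf{Anticipated obstacle.} The genuinely nontrivial input is Proposition \ref{prop-V}: that $\Sigma$ is preserved by the flow and that $F$ is $C^2$ with the stated virial identity, which requires the regularization argument alluded to there. Given that, the remaining argument is the standard Berestycki--Cazenave concavity scheme, and the only point demanding care is checking that the sign of $\delta$ is correct --- that is, that the strict inequality $S_\omega(u_0)<d(\omega)$ defining $\mathcal{B}_\omega$ is exactly what makes $F''$ stay uniformly negative rather than merely nonpositive.
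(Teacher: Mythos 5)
Your proof is correct and follows essentially the same route as the paper: invariance of $\mathcal{B}_{\omega}$, then Lemma \ref{lem1} plus conservation of $S_{\omega}$ to get the uniform bound $P(u(t))\le S_{\omega}(u_0)-d(\omega)<0$, then the virial identity and the standard concavity argument. The paper merely compresses the final integration step into ``this implies $T_{\max}<\infty$,'' which you spell out explicitly.
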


\begin{proof}
Let $u_0\in \mathcal{B}_{\omega}\cap \Sigma$ 
and let $u(t)$ be the solution of \eqref{nls} with $u(0)=u_0$. 
Then, it follows from Lemma \ref{lem2} and Proposition \ref{prop-V} that 
$u(t)\in \mathcal{B}_{\omega}\cap \Sigma$ for all $t\in [0,T_{\max})$. 

Moreover, by the virial identity \eqref{virial}, 
the conservation laws \eqref{conservation}
and Lemma \ref{lem1}, we have 
\begin{align*}
&\frac{1}{16}\frac{d^2}{dt^2} 
\sum_{j=1}^{N-1} \int_{\R^N} x_j^2 |u(t,x)|^2\,dx
=P(u(t)) \\
&\le S_{\omega} (u(t))-d(\omega) 
=S_{\omega}(u_0)-d(\omega)<0
\end{align*}
for all $t\in [0,T_{\max})$. 
This implies $T_{\max}<\infty$. 
\end{proof}

Finally, we give the proof of Theorem \ref{thm1}. 

\vspace{2mm} \noindent 
{\bf Proof of Theorem \ref{thm1}}. 
First, 
by the elliptic regularity theory, 
we see that $\phi_{\omega}\in \Sigma$
(see, e.g., \cite[Theorem 8.1.1]{caz}). 

Next, since 
$S_{\omega}'(\phi_{\omega})=0$, 
the function 
\begin{align*}
&(0,\infty)\ni \lambda\mapsto \\ 
&S_{\omega}(\lambda \phi_{\omega})
=\lambda^2 \left\{\frac{1}{2} \|\nabla \phi_{\omega}\|_{L^2}^2
+\frac{1}{2}\|x_N \phi_{\omega}\|_{L^2}^2
+\frac{\omega}{2} \|\phi_{\omega}\|_{L^2}^2 \right\}
-\frac{\lambda^{p+1}}{p+1} \|\phi_{\omega}\|_{L^{p+1}}^{p+1}
\end{align*}
attains its maximum at $\lambda=1$. 
Thus, we have 
\begin{align*}
S_{\omega}(\lambda \phi_{\omega})
<S_{\omega}(\phi_{\omega})=d(\omega)
\end{align*}
for all $\lambda>1$. 
Moreover, 
since $P(\phi_{\omega})=0$, we have 
\begin{align*} 
P(\lambda \phi_{\omega})
=\frac{\lambda^2}{2} \sum_{j=1}^{N-1} \|\p_j \phi_{\omega}\|_{L^2}^2
-\frac{\alpha \lambda^{p+1}}{2(p+1)}\|\phi_{\omega}\|_{L^{p+1}}^{p+1}<0
\end{align*}
for all $\lambda>1$. 

Therefore, we see that 
$\lambda \phi_{\omega}\in \mathcal{B}_{\omega}\cap \Sigma$
for  all $\lambda>1$, 
and it follows from Theorem \ref{thm2} that 
the solution $u(t)$ of \eqref{nls} with $u(0)=\lambda \phi_{\omega}$ 
blows up in finite time. 
Hence, the result follows, 
since $\lambda \phi_{\omega}\to \phi_{\omega}$ in $X$ as $\lambda\to 1$. 
\qed \vspace{2mm} 

\section{Proof of Proposition \ref{prop-GS}}

In this section, we prove Proposition \ref{prop-GS} 
by using the standard concentration compactness argument. 
Throughout this section, we assume that $1<p<1+4/(N-2)$ and $\omega\in (-1,\infty)$. 

We define 
\begin{align}
J_{\omega}(v)
&=S_{\omega}(v)-\frac{1}{p+1} K_{\omega} (v) \label{J1} \\
&=\frac{p-1}{2(p+1)} \left( \|\nabla v\|_{L^2}^2+\|x_N v\|_{L^2}^2+\omega \|v\|_{L^2}^2 \right). 
\nonumber 
\end{align}

Note that by \eqref{He1}, 
there exists a positive constant $C_0$ 
depending only on $\omega$ and $p$ such that 
\begin{equation} \label{He2}
J_{\omega}(v)\ge C_0 \|v\|_{X}^2, \quad 
v\in X. 
\end{equation}

We also remark that 
by \eqref{J1} and \eqref{GS3}, 
we have 
\begin{equation} \label{J2}
d(\omega)
=\inf \{J_{\omega}(v): v\in X,~ K_{\omega}(v)=0,~ v\ne 0\}. 
\end{equation}

\begin{lemma} \label{lem3}
$d(\omega)>0$. 
\end{lemma}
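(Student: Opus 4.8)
The plan is to show $d(\omega) > 0$ by combining the variational characterization \eqref{J2} with the coercivity estimate \eqref{He2} and the Sobolev embedding. The point is that the constraint $K_{\omega}(v) = 0$ forces the $L^{p+1}$-norm to equal the quadratic form $\|\nabla v\|_{L^2}^2 + \|x_N v\|_{L^2}^2 + \omega\|v\|_{L^2}^2$, and since $p+1 > 2$, this prevents a nonzero admissible $v$ from having arbitrarily small norm; the infimum of $J_{\omega}$ over the constraint set is therefore bounded below by a positive constant.

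Here is how I would carry it out. First, take any $v \in X$ with $K_{\omega}(v) = 0$ and $v \ne 0$. By the definition of $K_{\omega}$, this means
\[
\|\nabla v\|_{L^2}^2 + \|x_N v\|_{L^2}^2 + \omega\|v\|_{L^2}^2 = \|v\|_{L^{p+1}}^{p+1}.
\]
By \eqref{He1}, the left-hand side is at least $C_1(\omega)\|v\|_X^2$, and by the Gagliardo–Nirenberg/Sobolev inequality (valid since $1 < p < 1+4/(N-2)$) there is a constant $C_3 > 0$ with $\|v\|_{L^{p+1}}^{p+1} \le C_3 \|v\|_{X}^{p+1}$ (one may use $\|v\|_{H^1} \le \|v\|_X$). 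Combining these gives $C_1(\omega)\|v\|_X^2 \le C_3 \|v\|_X^{p+1}$, and since $v \ne 0$ implies $\|v\|_X > 0$, we may divide by $\|v\|_X^2$ to obtain $\|v\|_X^{p-1} \ge C_1(\omega)/C_3$, i.e.
\[
\|v\|_X^2 \ge \left(\frac{C_1(\omega)}{C_3}\right)^{2/(p-1)} =: \delta > 0.
\]
Then by \eqref{He2}, $J_{\omega}(v) \ge C_0 \|v\|_X^2 \ge C_0 \delta$ for every admissible $v$, so taking the infimum over the constraint set and using \eqref{J2} yields $d(\omega) \ge C_0 \delta > 0$.

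There is no serious obstacle here; the only thing that requires a word of care is invoking the Sobolev embedding with the right exponent, namely that $H^1(\R^N) \hookrightarrow L^{p+1}(\R^N)$ holds precisely because $p+1 \le 2N/(N-2)$ (with the convention that this is $\infty$ when $N=2$), which is exactly the hypothesis $p < 1+4/(N-2)$. The constant $\delta$ depends only on $\omega$ and $p$ through $C_1(\omega)$ and the embedding constant, consistent with the earlier remark that $C_0$ in \eqref{He2} depends only on $\omega$ and $p$. One should also note that the whole argument never uses that the infimum defining $d(\omega)$ is attained, so Lemma \ref{lem3} is genuinely a preliminary step logically independent of the existence part of Proposition \ref{prop-GS}.
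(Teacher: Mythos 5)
Your proof is correct and uses the same ingredients as the paper's — the constraint $K_{\omega}(v)=0$, the Sobolev embedding $H^1\hookrightarrow L^{p+1}$, and the equivalence \eqref{He1}/\eqref{He2} — differing only in bookkeeping: you first derive a lower bound on $\|v\|_X$ and then apply \eqref{He2}, whereas the paper closes the loop directly on $J_{\omega}(v)$ via $J_{\omega}(v)\le C_2 J_{\omega}(v)^{(p+1)/2}$. This is essentially the same argument.
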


\begin{proof}
Let $v\in X$ satisfy $K_{\omega}(v)=0$ and $v\ne 0$. 

Then, by $K_{\omega}(v)=0$, the Sobolev inequality 
and \eqref{He2}, 
there exist positive constants $C_1$ and $C_2$ 
depending only on $N$, $p$ and $\omega$ such that 
\begin{align*}
J_{\omega}(v)
=\frac{p-1}{2(p+1)} \| v\|_{L^{p+1}}^{p+1}
\le C_1 \|v\|_{H^1}^{p+1}
\le C_1 \|v\|_{X}^{p+1}
\le C_2 J_{\omega}(v)^{(p+1)/2}. 
\end{align*}

Since $v\ne 0$, we have $J_{\omega}(v)>0$ 
and $J_{\omega}(v)^{(p-1)/2}\ge 1/C_2$. 

Thus, by \eqref{J2}, we have 
\begin{align*}
d(\omega)\ge \frac{1}{C_2^{2/(p-1)}} >0. 
\end{align*}

This completes the proof. 
\end{proof}

\begin{lemma} \label{lem4}
If $v\in X$ satisfies $K_{\omega}(v)<0$, 
then $d(\omega)<J_{\omega}(v)$. 
\end{lemma}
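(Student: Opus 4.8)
If $v\in X$ satisfies $K_{\omega}(v)<0$, then $d(\omega)<J_{\omega}(v)$.

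The plan is to use the standard scaling $\lambda v$ and find the value of $\lambda\in(0,1)$ at which the Nehari functional vanishes, then compare $J_\omega$ at that point with $J_\omega(v)$. First I would compute $K_\omega(\lambda v)=\lambda^2 A-\lambda^{p+1}B$, where $A=\|\nabla v\|_{L^2}^2+\|x_Nv\|_{L^2}^2+\omega\|v\|_{L^2}^2$ and $B=\|v\|_{L^{p+1}}^{p+1}$. By Heisenberg's inequality \eqref{He0} and $\omega>-1$ we have $A>0$ (note $v\neq0$ since $K_\omega(v)<0$ forces $B>0$, hence $v\neq 0$). The hypothesis $K_\omega(v)<0$ reads $A<B$, and since $p+1>2$ the function $\lambda\mapsto K_\omega(\lambda v)$ is positive for small $\lambda>0$, equals $A-B<0$ at $\lambda=1$, so there is a unique $\lambda_0\in(0,1)$ with $K_\omega(\lambda_0 v)=0$, namely $\lambda_0=(A/B)^{1/(p-1)}$.

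Next I would invoke \eqref{J2}: since $\lambda_0 v\neq 0$ and $K_\omega(\lambda_0 v)=0$, we get $d(\omega)\le J_\omega(\lambda_0 v)$. Finally, using the explicit form $J_\omega(\lambda v)=\dfrac{p-1}{2(p+1)}\lambda^2 A$ from \eqref{J1}, and $\lambda_0<1$, we conclude $J_\omega(\lambda_0 v)=\lambda_0^2 J_\omega(v)<J_\omega(v)$. Chaining these gives $d(\omega)\le J_\omega(\lambda_0 v)<J_\omega(v)$, which is the claim. Alternatively, one can avoid solving for $\lambda_0$ explicitly and simply argue: on $(0,\infty)$ the map $\lambda\mapsto K_\omega(\lambda v)$ changes sign from $+$ to $-$ as $\lambda$ crosses some $\lambda_0<1$ (by the intermediate value theorem applied on $(0,1]$, using continuity and the sign at $0^+$ versus at $1$), and $\lambda\mapsto J_\omega(\lambda v)=c\lambda^2$ is strictly increasing.

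There is essentially no hard step here — everything is an elementary one-variable calculus argument once the positivity $A>0$ is secured, which is exactly what \eqref{He0} and $\omega>-1$ provide (this is the same input used in Lemma \ref{lem1}). The only point requiring minor care is ensuring the zero $\lambda_0$ of the Nehari functional lies strictly in $(0,1)$ rather than merely in $(0,1]$; this follows because $K_\omega(v)<0$ is a \emph{strict} inequality, so $\lambda=1$ itself cannot be the zero. I would present it in three short displays: the formula for $K_\omega(\lambda v)$ and the location of its zero; the inequality $d(\omega)\le J_\omega(\lambda_0 v)$ from \eqref{J2}; and the chain $J_\omega(\lambda_0 v)=\lambda_0^2 J_\omega(v)<J_\omega(v)$.
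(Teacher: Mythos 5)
Your proof is correct and follows essentially the same route as the paper: scale to $\lambda_0 v$ with $K_\omega(\lambda_0 v)=0$ and $\lambda_0\in(0,1)$, then use \eqref{J2} together with the quadratic homogeneity $J_\omega(\lambda v)=\lambda^2 J_\omega(v)$. The extra details you supply (the explicit $\lambda_0=(A/B)^{1/(p-1)}$, the positivity of $A$ via \eqref{He0}, and the remark that $v\ne 0$) are all consistent with, and slightly more explicit than, the paper's argument.
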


\begin{proof}
Since $K_{\omega}(v)<0$ and 
\begin{equation} \label{K1}
K_{\omega} (\lambda v)
=\lambda^2 
\left( \|\nabla v\|_{L^2}^2+\|x_N v\|_{L^2}^2+\omega \|v\|_{L^2}^2 \right)
-\lambda^{p+1} \|v\|_{L^{p+1}}^{p+1}
\end{equation}
for $\lambda>0$, 
there exists $\lambda_0\in (0,1)$ such that $K_{\omega}(\lambda_0 v)=0$. 

Thus, by \eqref{J2} and \eqref{J1}, we have 
\begin{align*}
d(\omega)
\le J_{\omega} (\lambda_0 v)
=\lambda_0^2 J_{\omega}(v)
<J_{\omega}(v). 
\end{align*}

This completes the proof. 
\end{proof}

The following lemma is a variant of the classical result of Lieb \cite{Lieb}
(see also \cite[Lemma 3.4]{BBJV}). 

\begin{lemma} \label{lem5} 
Assume that a sequence $(u_n)_{n\in \N}$ is bounded in $X$, 
and satisfies 
\begin{align*}
\limsup_{n\to \infty} \|u_n\|_{L^{p+1}}^{p+1}>0. 
\end{align*}
Then, there exist a sequence $(y^n)_{n\in \N}$ in $\R^{N-1}$ and $u\in X\setminus \{0\}$ such that 
$(\tau_{y^n} u_n)_{n\in \N}$ has a subsequence which converges to $u$ weakly in $X$. 

Here we define 
\begin{align*}
\tau_y v(x)=v(x_1-y_1,...,x_{N-1}-y_{N-1},x_N)
\end{align*}
for $x=(x_1,...,x_{N-1},x_N)\in \R^N$ and $y=(y_1,...,y_{N-1})\in \R^{N-1}$. 
\end{lemma}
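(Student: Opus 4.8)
The plan is to run the classical ``bubbling'' argument of Lieb \cite{Lieb}, adapted to the fact that the translations $\tau_{y^n}$ act only on the first $N-1$ coordinates. The only genuinely new point — and the step I expect to be the crux — is to show that the confinement in the $x_N$-direction, encoded in the uniform bound on $\|x_N u_n\|_{L^2}$, forces the bubble produced by Lieb's argument to sit in a bounded range of $x_N$, so that no translation in the confined direction is needed. First I would extract a subsequence (not relabelled) with $\|u_n\|_{L^{p+1}}^{p+1}\ge\delta>0$ for all $n$, and set $M_0:=\sup_n\|u_n\|_X<\infty$. Tiling $\R^N$ by the unit cubes $(Q_k)_{k\in\Z^N}$ and using that $p+1<2N/(N-2)=2^*$, so that the embedding $H^1\hookrightarrow L^{p+1}$ on a unit cube holds with a constant $C_S$ independent of the cube (by translation invariance), I would combine
\[
\sum_k\|u_n\|_{L^{p+1}(Q_k)}^2\le C_S^2\sum_k\|u_n\|_{H^1(Q_k)}^2\le C_S^2M_0^2
\]
with $\sum_k\|u_n\|_{L^{p+1}(Q_k)}^{p+1}\le\big(\sup_k\|u_n\|_{L^{p+1}(Q_k)}\big)^{p-1}\sum_k\|u_n\|_{L^{p+1}(Q_k)}^2$ to obtain
\[
\delta\le\|u_n\|_{L^{p+1}}^{p+1}\le C_S^2M_0^2\,\big(\sup_k\|u_n\|_{L^{p+1}(Q_k)}\big)^{p-1}.
\]
Hence there are $\eta>0$ and, for each $n$, a cube $Q^{(n)}$ of the tiling with $\|u_n\|_{L^{p+1}(Q^{(n)})}\ge\eta$; write its centre as $(a^n,b^n)\in\R^{N-1}\times\R$.

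Now for the confinement step. By the Gagliardo--Nirenberg inequality on the unit cube $Q^{(n)}$ (again with a cube-independent constant), with $\mu:=\tfrac{N(p-1)}{2(p+1)}\in(0,1)$ since $1<p<1+4/(N-2)$,
\[
\eta\le\|u_n\|_{L^{p+1}(Q^{(n)})}\le C\,\|u_n\|_{L^2(Q^{(n)})}^{1-\mu}\,\|u_n\|_{H^1(Q^{(n)})}^{\mu}\le CM_0^{\mu}\,\|u_n\|_{L^2(Q^{(n)})}^{1-\mu},
\]
so $\|u_n\|_{L^2(Q^{(n)})}\ge c_1>0$ for all $n$. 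On the other hand, if $|b^n|>1$ then $|x_N|\ge|b^n|-1$ on $Q^{(n)}$, whence
\[
c_1^2\le\|u_n\|_{L^2(Q^{(n)})}^2\le(|b^n|-1)^{-2}\|x_N u_n\|_{L^2}^2\le M_0^2(|b^n|-1)^{-2}.
\]
Therefore $|b^n|\le M:=1+M_0/c_1$ for every $n$: the bubble is already confined to a bounded $x_N$-slab. This is the one place where the structure of $X$ (rather than merely of $H^1$) is used, and it is the part of the argument I expect to require the most care.

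Finally I would take $y^n:=-a^n\in\R^{N-1}$ and $v_n:=\tau_{y^n}u_n$. Since $\tau_{y^n}$ leaves $\|\cdot\|_{L^2}$, $\|\nabla\cdot\|_{L^2}$ and $\|x_N\cdot\|_{L^2}$ unchanged, $(v_n)$ is bounded in $X$, and because $Q^{(n)}\subset(a^n,0)+R_0$ for the \emph{fixed} box $R_0:=\{x\in\R^N:|x_j|\le1\ (1\le j\le N-1),\ |x_N|\le M+1\}$, a change of variables gives $\|v_n\|_{L^{p+1}(R_0)}\ge\eta$ for all $n$. Passing to a subsequence, $v_n\rightharpoonup u$ weakly in $X$; restricting to $R_0$ this yields $v_n\rightharpoonup u$ in $H^1(R_0)$, and since $p+1<2^*$ the Rellich--Kondrachov theorem upgrades this to $v_n\to u$ strongly in $L^{p+1}(R_0)$. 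Hence $\|u\|_{L^{p+1}(R_0)}=\lim_n\|v_n\|_{L^{p+1}(R_0)}\ge\eta$, so $u\in X\setminus\{0\}$, which is exactly the assertion. Apart from the confinement step everything here is routine, the only recurring technical remark being that all the constants on the unit cubes are uniform by translation invariance.
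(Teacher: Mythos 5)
Your proof is correct, and it takes a genuinely different route from the paper's at the two places where the geometry of the problem enters. The paper tiles $\R^N$ by \emph{strips} $Q_y=(y_1,y_1+1)\times\cdots\times(y_{N-1},y_{N-1}+1)\times\R$ that are unbounded in the $x_N$-direction, locates a good strip by a pigeonhole comparison of $\|u_n\|_{X(Q_y)}^2$ with $\|u_n\|_{L^{p+1}(Q_y)}^{p+1}$, and then concludes $u\ne 0$ from the \emph{compactness of the weighted embedding} $X(Q_0)\hookrightarrow L^{p+1}(Q_0)$ on the unbounded strip, where the weight $x_N$ supplies tightness as $|x_N|\to\infty$. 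You instead tile by bounded unit cubes, locate a good cube by the standard $\ell^2$-versus-$\ell^{p+1}$ comparison, and use the bound on $\|x_N u_n\|_{L^2}$ in a more hands-on way: via Gagliardo--Nirenberg you force a definite amount of $L^2$ mass onto the good cube, and the weight then pins that cube into a fixed slab $|x_N|\le M$, after which only the ordinary Rellich--Kondrachov theorem on a bounded box is needed. The trade-off is clear: the paper's strip tiling makes any localization in $x_N$ unnecessary but requires knowing (or proving) the compact embedding of the weighted space on an unbounded strip, while your version uses only elementary inequalities plus Rellich on a bounded domain at the cost of the extra confinement step. Both correctly exploit that translations in $x_1,\dots,x_{N-1}$ preserve $\|x_N\cdot\|_{L^2}$, so the translated sequence stays bounded in $X$. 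Your argument is complete as written; the only cosmetic points are that the supremum over cubes is attained (the cube norms are $\ell^{p+1}$-summable, so they tend to $0$), and that weak convergence in $X$ passes to weak convergence in $H^1(R_0)$ because restriction is a bounded linear map.
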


\begin{proof}
Without loss of generality, we may assume that 
\begin{align*}
C_1:=\inf_{n\in \N} \|u_n\|_{L^{p+1}}^{p+1}>0. 
\end{align*}

Moreover, we put 
\begin{align*}
C_2:=\sup_{n\in \N} \|u_n\|_X^2, \quad 
C_3:=\frac{C_2+1}{C_1}, 
\end{align*}
and for $y=(y_1,...,y_{N-1})\in \Z^{N-1}$, we define 
\begin{align*}
Q_y
&=(y_1,y_1+1)\times \cdots \times (y_{N-1},y_{N-1}+1)\times \R \\
&=\{(x_1,...,x_{N-1},x_N)\in \R^N: y_j<x_j<y_j+1 \hspace{1mm} (j=1,...,N-1)\}. 
\end{align*}

Then, by the definition of $C_3$, we see that 
for any $n\in \N$, there exists $y^n\in \Z^{N-1}$ such that 
\begin{align*}
\|u_n\|_{X(Q_{y^n})}^2
<C_3 \|u_n\|_{L^{p+1}(Q_{y^n})}^{p+1}. 
\end{align*}
where we put 
\begin{align*}
\|v\|_{X(Q_y)}^2
=\|v\|_{H^1(Q_y)}^2+\|x_N v\|_{L^2(Q_y)}^2.
\end{align*}

Here, we define $v_n=\tau_{-y^n} u_n$. 
Then, we have 
\begin{align*}
\|v_n\|_{X(Q_{0})}^2<C_3 \|v_n\|_{L^{p+1}(Q_{0})}^{p+1}
\end{align*}
for all $n\in \N$. 
In particular,  
$\|v_n\|_{L^{p+1}(Q_{0})}^{p+1}>0$ for all $n\in \N$. 

Moreover, by the Sobolev inequality, we have 
\begin{align*}
C_4 \|v_n\|_{L^{p+1}(Q_{0})}^2
\le \|v_n\|_{H^1(Q_{0})}^2
\le \|v_n\|_{X(Q_{0})}^2
\end{align*}
for all $n\in \N$, 
where $C_4$ is a positive constant depending only on $N$ and $p$. 

Thus, we have 
\begin{equation} \label{lieb}
\frac{C_4}{C_3}<\|v_n\|_{L^{p+1}(Q_{0})}^{p-1}, \quad n\in \N. 
\end{equation}

Since $(v_n)_{n\in \N}$ is bounded in $X$, 
there exist a subsequence $(v_{n'})$ of $(v_n)$ and $u\in X$ such that 
$(v_{n'})$ converges to $u$ weakly in $X$. 

Finally, since the embedding $X(Q_0) \hookrightarrow L^{p+1}(Q_0)$ is compact, 
it follows from \eqref{lieb} that 
\begin{equation*}
0<\frac{C_4}{C_3}\le 
\|u\|_{L^{p+1}(Q_{0})}^{p-1},
\end{equation*}
which implies $u\ne 0$. 
This completes the proof. 
\end{proof}

We define the set of all minimizers for \eqref{GS3} by 
\begin{equation*}
\mathcal{M}_{\omega}
=\{v\in X: S_{\omega}(v)=d(\omega),~ K_{\omega}(v)=0,~v\ne 0\}. 
\end{equation*}

\begin{lemma}
The set $\mathcal{M}_{\omega}$ is not empty. 
\end{lemma}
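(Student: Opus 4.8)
The plan is to prove that $\mathcal{M}_\omega \ne \emptyset$ by taking a minimizing sequence for the constrained problem \eqref{J2} and extracting from it a nonzero weak limit which, after rescaling onto the Nehari manifold, attains the infimum $d(\omega)$. First I would pick a sequence $(v_n)_{n\in\N}$ in $X$ with $K_\omega(v_n)=0$, $v_n\ne 0$, and $J_\omega(v_n)\to d(\omega)$. By \eqref{He2} the sequence is bounded in $X$. Since $K_\omega(v_n)=0$ gives $\|v_n\|_{L^{p+1}}^{p+1}=\|\nabla v_n\|_{L^2}^2+\|x_N v_n\|_{L^2}^2+\omega\|v_n\|_{L^2}^2 = \frac{2(p+1)}{p-1}J_\omega(v_n)\to \frac{2(p+1)}{p-1}d(\omega)>0$ by Lemma \ref{lem3}, we have $\limsup_n\|v_n\|_{L^{p+1}}^{p+1}>0$, so Lemma \ref{lem5} applies: after translating by a suitable sequence $(y^n)\subset\R^{N-1}$ and passing to a subsequence, $w_n:=\tau_{y^n}v_n\rightharpoonup \phi$ weakly in $X$ with $\phi\ne 0$. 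Note $J_\omega$, $S_\omega$, $K_\omega$ are all invariant under the translations $\tau_y$ (they only shift the non-confined variables), so $K_\omega(w_n)=0$ and $J_\omega(w_n)\to d(\omega)$ still.

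Next I would use weak lower semicontinuity and the compactness of the local embedding. The quadratic part $\|\nabla\cdot\|_{L^2}^2+\|x_N\cdot\|_{L^2}^2+\omega\|\cdot\|_{L^2}^2$ is (by \eqref{He1}) an equivalent norm squared on $X$, hence weakly lower semicontinuous, so $J_\omega(\phi)\le\liminf_n J_\omega(w_n)=d(\omega)$. For the nonlinear term I would argue that $w_n\to\phi$ strongly in $L^{p+1}_{loc}$ (by Rellich on each cube $Q_y$, since $X(Q_y)\hookrightarrow L^{p+1}(Q_y)$ is compact) and then, crucially, that no mass escapes to infinity in the non-confined directions — this is where the translation from Lemma \ref{lem5} is used, but it only gives one nonzero bubble, not full convergence. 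The cleanest route is: from $K_\omega(w_n)=0$ we get $\frac{p-1}{2(p+1)}\|w_n\|_{L^{p+1}}^{p+1}=J_\omega(w_n)\to d(\omega)$, and by Fatou/weak-lower-semicontinuity together with Brezis–Lieb, $\|\phi\|_{L^{p+1}}^{p+1}\le\liminf\|w_n\|_{L^{p+1}}^{p+1}=\frac{2(p+1)}{p-1}d(\omega)$. Combined with $J_\omega(\phi)\le d(\omega)$ this yields $K_\omega(\phi)=J_\omega(\phi)\cdot\frac{2(p+1)}{p-1}^{-1}\cdot\text{(stuff)}\le 0$; more precisely $K_\omega(\phi)=\frac{2(p+1)}{p-1}J_\omega(\phi)-\|\phi\|_{L^{p+1}}^{p+1}\le \frac{2(p+1)}{p-1}d(\omega)-\|\phi\|_{L^{p+1}}^{p+1}$, and one has to show the right-hand side is $\le 0$, i.e. $\|\phi\|_{L^{p+1}}^{p+1}\ge\frac{2(p+1)}{p-1}d(\omega)$.

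To close the argument I would consider two cases on the sign of $K_\omega(\phi)$. If $K_\omega(\phi)<0$, then by Lemma \ref{lem4}, $d(\omega)<J_\omega(\phi)\le d(\omega)$, a contradiction; hence $K_\omega(\phi)\ge 0$. If $K_\omega(\phi)=0$, then since $\phi\ne 0$, $\phi$ is admissible in \eqref{J2}, so $J_\omega(\phi)\ge d(\omega)$, which together with $J_\omega(\phi)\le d(\omega)$ gives $J_\omega(\phi)=d(\omega)$; recalling $S_\omega(\phi)=J_\omega(\phi)+\frac{1}{p+1}K_\omega(\phi)=d(\omega)$, we conclude $\phi\in\mathcal{M}_\omega$ and we are done. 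The remaining case $K_\omega(\phi)>0$ I would rule out by rescaling: by \eqref{K1} there is $\lambda_0>1$ with $K_\omega(\lambda_0\phi)=0$ (since $K_\omega(\lambda\phi)=\lambda^2 A-\lambda^{p+1}B$ with $A,B>0$ and $A>B$ when $K_\omega(\phi)>0$, the zero occurs past $\lambda=1$), and then $d(\omega)\le J_\omega(\lambda_0\phi)=\lambda_0^2 J_\omega(\phi)\le\lambda_0^2 d(\omega)$; this is not yet a contradiction, so instead I would use strong convergence. The cleanest fix: show $w_n\to\phi$ strongly in $L^{p+1}(\R^N)$. Indeed if not, Brezis–Lieb gives a nonzero defect; combined with the vanishing alternative ruled out by Lemma \ref{lem5} and a dichotomy-type splitting of $d(\omega)$ (subadditivity $d(\omega)<$ any nontrivial split, which follows from strict concavity in $\lambda$ of $\lambda\mapsto J_\omega$ relative to the constraint), one forces the defect to vanish, hence $\|w_n\|_{L^{p+1}}^{p+1}\to\|\phi\|_{L^{p+1}}^{p+1}=\frac{2(p+1)}{p-1}d(\omega)$, which with $J_\omega(\phi)\le d(\omega)$ gives $K_\omega(\phi)\le 0$, sending us back to the first two cases.

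The main obstacle is precisely this last point: ruling out loss of $L^{p+1}$-mass, i.e. upgrading the single-bubble extraction of Lemma \ref{lem5} to genuine strong convergence of the (translated) minimizing sequence in $L^{p+1}$. This requires either a full concentration–compactness dichotomy argument in the $X$-setting (splitting the sequence, using the strict subadditivity of $d(\omega)$ — which in turn rests on the scaling $K_\omega(\lambda v)=\lambda^2 A(v)-\lambda^{p+1}B(v)$ and the resulting strict monotonicity of the rescaled action in the size of $B(v)$), or a cleaner Brezis–Lieb bookkeeping: write $w_n=\phi+r_n$ with $r_n\rightharpoonup 0$, so $J_\omega(w_n)=J_\omega(\phi)+J_\omega(r_n)+o(1)$ and $\|w_n\|_{L^{p+1}}^{p+1}=\|\phi\|_{L^{p+1}}^{p+1}+\|r_n\|_{L^{p+1}}^{p+1}+o(1)$, hence $K_\omega(w_n)=K_\omega(\phi)+K_\omega(r_n)+o(1)$; since $K_\omega(w_n)=0$, either $K_\omega(\phi)\le 0$ (done, as above) or $K_\omega(\phi)>0$ forces $\liminf K_\omega(r_n)<0$, whence by Lemma \ref{lem4} $J_\omega(r_n)>d(\omega)$ along a subsequence, contradicting $J_\omega(\phi)+J_\omega(r_n)\le d(\omega)+o(1)$ together with $J_\omega(\phi)\ge C_0\|\phi\|_X^2>0$. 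This Brezis–Lieb route is the one I would actually write out, as it avoids the heavier dichotomy machinery.
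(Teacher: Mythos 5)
Your proposal is correct, and the ``Brezis--Lieb route'' you commit to at the end is exactly the paper's own argument: weak lower semicontinuity of the quadratic form $J_{\omega}$ plus the splitting $K_{\omega}(w_n)=K_{\omega}(\phi)+K_{\omega}(w_n-\phi)+o(1)$, with Lemma \ref{lem4} applied to the remainder to rule out $K_{\omega}(\phi)>0$, and Lemma \ref{lem4} again to upgrade $K_{\omega}(\phi)\le 0$ to $K_{\omega}(\phi)=0$. The detours you consider (strong $L^{p+1}$ convergence, full dichotomy) are unnecessary, as you correctly conclude.
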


\begin{proof}
Let $(u_n)$ be a sequence in $X$ such that 
$K_{\omega}(u_n)=0$, $u_n\ne 0$ for all $n\in \N$, 
and $S_{\omega}(u_n)\to d(\omega)$. 

Then, by \eqref{He2} and $J_{\omega}(u_n)=S_{\omega}(u_n)\to d(\omega)$, 
we see that the sequence $(u_n)_{n\in \N}$ is bounded in $X$. 

Moreover, it follows from $K_{\omega}(u_n)=0$ and 
Lemma \ref{lem3} that 
\begin{align*}
\|u_n\|_{L^{p+1}}^{p+1}
=\frac{2(p+1)}{p-1} J_{\omega} (u_n)
\to \frac{2(p+1)}{p-1} d(\omega)>0. 
\end{align*}

Thus, by Lemma \ref{lem5}, 
there exist a sequence $(y^n)$ in $\R^{N-1}$, 
a subsequence of $(\tau_{y^n} u_n)$, 
which is denoted by $(v_n)$, 
and $v\in X\setminus \{0\}$ such that 
$(v_n)$ converges to $v$ weakly in $X$. 
By the weakly lower semicontinuity of $J_{\omega}$, 
we have 
\begin{equation} \label{Jd}
J_{\omega}(v)
\le \liminf_{n\to \infty} J_{\omega}(v_n)=d(\omega). 
\end{equation}

Moreover, by the Brezis-Lieb Lemma (see \cite{BL}), 
we have 
\begin{equation*}
K_{\omega}(v_n)-K_{\omega}(v_n-v)\to K_{\omega}(v), 
\end{equation*}
which implies $K_{\omega}(v)\le 0$. 

Indeed, suppose that $K_{\omega}(v)>0$. 
Since $K_{\omega}(v_n)=0$, 
we have $K_{\omega}(v_n-v)<0$ for large $n$. 
Then, by Lemma \ref{lem4}, 
we have $d(\omega)<J_{\omega}(v_n-v)$, and 
\begin{equation*}
J_{\omega}(v)
=\lim_{n\to \infty}
\{ J_{\omega}(v_n)-J_{\omega}(v_n-v) \} \le 0.
\end{equation*}
On the other hand, 
by $v\ne 0$ and \eqref{He2}, 
we have $J_{\omega}(v)>0$. 
This is a contradiction. 
Thus, we obtain $K_{\omega}(v)\le 0$. 

Furthermore, by Lemma \ref{lem4} and \eqref{Jd}, 
we have $K_{\omega}(v)=0$. 
Since $v\ne 0$ again, 
it follows from \eqref{GS3} and \eqref{Jd} that 
\begin{align*}
d(\omega)\le S_{\omega}(v)=J_{\omega}(v)\le d(\omega). 
\end{align*}

Hence, we have $S_{\omega}(v)=d(\omega)$ 
and $v\in \mathcal{M}_{\omega}$. 

This completes the proof. 
\end{proof}

\begin{lemma} \label{lem7} 
$\mathcal{M}_{\omega}\subset \mathcal{G}_{\omega}$. 
\end{lemma}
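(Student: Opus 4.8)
The plan is to show that any minimizer $v\in\mathcal{M}_\omega$ is in fact a critical point of $S_\omega$, i.e.\ $v\in\mathcal{A}_\omega$, and then deduce that it is a ground state by comparing actions. The starting point is the standard Lagrange multiplier argument: since $v$ minimizes $S_\omega$ on the constraint manifold $\{K_\omega=0\}$, there is $\mu\in\R$ with $S_\omega'(v)=\mu K_\omega'(v)$. To rule out $\mu\ne 0$, I would pair this identity with $v$ itself. Using that $K_\omega(v)=\langle S_\omega'(v),v\rangle=0$, we get $0=\mu\langle K_\omega'(v),v\rangle$, so it suffices to check $\langle K_\omega'(v),v\rangle\ne 0$. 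A direct computation from the explicit formula for $K_\omega$ gives
\begin{align*}
\langle K_\omega'(v),v\rangle
=2\left(\|\nabla v\|_{L^2}^2+\|x_Nv\|_{L^2}^2+\omega\|v\|_{L^2}^2\right)
-(p+1)\|v\|_{L^{p+1}}^{p+1},
\end{align*}
and substituting $\|v\|_{L^{p+1}}^{p+1}=\|\nabla v\|_{L^2}^2+\|x_Nv\|_{L^2}^2+\omega\|v\|_{L^2}^2$ (which is $K_\omega(v)=0$) yields $\langle K_\omega'(v),v\rangle=-(p-1)\big(\|\nabla v\|_{L^2}^2+\|x_Nv\|_{L^2}^2+\omega\|v\|_{L^2}^2\big)=-\tfrac{2(p+1)}{p-1}\cdot\tfrac{p-1}{2}J_\omega(v)\cdot\tfrac{2}{p+1}$, which equals $-\tfrac{2(p-1)}{p+1}\cdot\tfrac{p+1}{p-1}J_\omega(v)$ — in any case a negative multiple of $J_\omega(v)$. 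Since $v\ne 0$, estimate \eqref{He2} gives $J_\omega(v)>0$, hence $\langle K_\omega'(v),v\rangle<0$, forcing $\mu=0$ and therefore $S_\omega'(v)=0$, i.e.\ $v\in\mathcal{A}_\omega$.

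Once $v\in\mathcal{A}_\omega$ is established, I would finish by showing $S_\omega(v)\le S_\omega(w)$ for every $w\in\mathcal{A}_\omega$. For such $w$ we have $S_\omega'(w)=0$, so in particular $K_\omega(w)=\langle S_\omega'(w),w\rangle=0$, and $w\ne 0$; hence $w$ is admissible in the infimum \eqref{GS3} defining $d(\omega)$, giving $S_\omega(w)\ge d(\omega)=S_\omega(v)$. Therefore $v$ satisfies the defining property of $\mathcal{G}_\omega$ in \eqref{GS1}, so $v\in\mathcal{G}_\omega$, and since $v\in\mathcal{M}_\omega$ was arbitrary, $\mathcal{M}_\omega\subset\mathcal{G}_\omega$.

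The only genuinely delicate point is the Lagrange multiplier step, where one must know that the constraint manifold $\{K_\omega=0\}$ is a genuine $C^1$ submanifold near $v$ — equivalently that $K_\omega'(v)\ne 0$ — before invoking the multiplier rule. This is handled by exactly the computation above: $\langle K_\omega'(v),v\rangle<0$ already shows $K_\omega'(v)\ne 0$, so the multiplier rule applies and simultaneously the pairing with $v$ kills the multiplier. Everything else is bookkeeping with the conserved quantities and the identity $S_\omega=J_\omega$ on $\{K_\omega=0\}$. I would also remark that combined with the reverse inclusion $\mathcal{G}_\omega\subset\mathcal{M}_\omega$ (which follows from the characterization one is aiming to prove, or can be argued directly by the same Nehari-type rescaling as in Lemma \ref{lem4}), this yields $\mathcal{G}_\omega=\mathcal{M}_\omega$, hence the nonemptiness of $\mathcal{G}_\omega$ and the characterization \eqref{GS2} claimed in Proposition \ref{prop-GS}.
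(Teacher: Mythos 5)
Your proof is correct and follows essentially the same route as the paper: the Lagrange multiplier identity $S_\omega'(v)=\mu K_\omega'(v)$, pairing with $v$ and using $K_\omega(v)=0$ to get $\langle K_\omega'(v),v\rangle<0$ (the paper substitutes the constraint to obtain $-(p-1)\|v\|_{L^{p+1}}^{p+1}$, you obtain the equivalent negative multiple of $J_\omega(v)$), hence $\mu=0$, followed by the same comparison of actions over $\mathcal{A}_\omega$ via the infimum \eqref{GS3}. Your added observation that the same computation shows $K_\omega'(v)\ne 0$, justifying the multiplier rule, is a worthwhile point the paper leaves implicit.
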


\begin{proof}
Let $\phi \in \mathcal{M}_{\omega}$. 
Then, there exists a Lagrange multiplier $\mu \in \R$ such that 
$S_{\omega}'(\phi)=\mu K_{\omega}'(\phi)$. 
Thus, we have 
\begin{align*}
0=K_{\omega}(\phi)=\langle S_{\omega}'(\phi), \phi \rangle
=\mu \langle K_{\omega}'(\phi), \phi \rangle. 
\end{align*}

Here, by \eqref{K1}, $K_{\omega}(\phi)=0$ and $\phi \ne 0$, we have 
\begin{align*}
\langle K_{\omega}'(\phi), \phi \rangle
&=\p_{\lambda} K_{\omega}(\lambda \phi)|_{\lambda=1} \\
&=2 \left( \|\nabla \phi \|_{L^2}^2+\|x_N \phi \|_{L^2}^2+\omega \| \phi \|_{L^2}^2 \right)
-(p+1) \| \phi \|_{L^{p+1}}^{p+1} \\
&=-(p-1) \| \phi \|_{L^{p+1}}^{p+1}<0. 
\end{align*}

Thus, we have $\mu=0$ and $S_{\omega}'(\phi)=0$, 
which shows that $\phi \in \mathcal{A}_{\omega}$. 

Moreover, for any $v\in \mathcal{A}_{\omega}$, 
we have $K_{\omega}(v)=\langle S_{\omega}'(v), v \rangle=0$ and $v\ne 0$, 
so it follows from the definition \eqref{GS3} of $d(\omega)$ that 
$S_{\omega}(\phi)=d(\omega)\le S_{\omega} (v)$. 

Therefore, we have $\phi \in \mathcal{G}_{\omega}$, 
and we conclude that $\mathcal{M}_{\omega}\subset \mathcal{G}_{\omega}$. 
\end{proof}

Finally, we give the proof of Proposition \ref{prop-GS}. 

\vspace{2mm} \noindent 
{\bf Proof of Proposition \ref{prop-GS}}. 
By Lemma \ref{lem7}, 
it is enough to show that 
$\mathcal{G}_{\omega}\subset \mathcal{M}_{\omega}$. 

Let $\phi \in \mathcal{G}_{\omega}$. 
By Lemma \ref{lem7}, we can take an element $v\in \mathcal{M}_{\omega}$. 
Then, since $v\in \mathcal{A}_{\omega}$ and $\phi \in \mathcal{G}_{\omega}$, 
by the definition \eqref{GS1} of $\mathcal{G}_{\omega}$, 
we have 
\begin{align*}
S_{\omega}(\phi)\le S_{\omega}(v)=d(\omega).
\end{align*}

On the other hand, since $\phi$ satisfies $K_{\omega}(\phi)=0$ and $\phi\ne 0$, 
by the definition \eqref{GS3} of $d(\omega)$, 
we have $d(\omega)\le S_{\omega}(\phi)$. 

Hence, we have $S_{\omega}(\phi)=d(\omega)$ 
and $\phi \in \mathcal{M}_{\omega}$. 

This completes the proof. 
\qed \vspace{2mm} 

\vspace{2mm} \noindent 
{\bf Acknowledgments}. 
The author thanks Louis Jeanjean and Noriyoshi Fukaya 
for useful discussions and suggestions. 
This work was supported by JSPS KAKENHI Grant Number 15K04968.

{\it E-mail address}: mohta@rs.tus.ac.jp 


\begin{thebibliography}{99}

\bibitem{ACS}
P.~Antonelli, R.~Carles, and J.~Drumond Silva, 
{\it Scattering for nonlinear Schr\"{o}dinger equation under partial harmonic confinement}, 
Comm. Math. Phys. {\bf 334} (2015), 367--396. 

\bibitem{BBJV}
J.~Bellazzini, N.~Boussa\"id, L.~Jeanjean and N.~Visciglia,  
{\it Existence and stability of standing waves for supercritical NLS with a partial confinement}, 
Comm. Math. Phys. {\bf 353} (2017), 229--251. 

\bibitem{BC} 
H.~Berestycki and T.~Cazenave, 
{\it Instabilit\'{e} des \'{e}tats stationnaires dans les \'{e}quations 
de Schr\"{o}dinger et de Klein-Gordon non lin\'{e}aires}, 
C. R. Acad. Sci. Paris S\'{e}r. I Math. {\bf 293} (1981), 489--492. 


\bibitem{BL}
H.~Brezis and E.~H.~Lieb, 
{\it A relation between pointwise convergence of functions 
and convergence of functionals}, 
Proc. Amer. Math. Soc. {\bf 88} (1983), 486--490.

\bibitem{CG}
R.~Carles and C.~Gallo, 
{\it Scattering for the nonlinear Schr\"{o}dinger equation 
with a general one-dimensional confinement}, 
J. Math. Phys. {\bf 56} (2015), 101503, 15 pp. 

\bibitem{caz}
T.~Cazenave,
\lq\lq Semilinear Schr\"{o}dinger equations,"
Courant Lect. Notes in Math., 10, New York University, 
Courant Institute of Mathematical Sciences, New York; 
Amer. Math. Soc., Providence, RI, 2003. 

\bibitem{CL}
T.~Cazenave and P.-L.~Lions,
{\it Orbital stability of standing waves for 
some nonlinear Schr\"{o}dinger equations}, 
Comm. Math. Phys. {\bf 85} (1982), 549--561.

\bibitem{fuk05}
R.~Fukuizumi, 
{\it Stability of standing waves for nonlinear Schr\"{o}dinger equations 
with critical power nonlinearity and potentials}, 
Adv. Differential Equations {\bf 10} (2005), 259--276.

\bibitem{FO1}
R.~Fukuizumi and M.~Ohta, 
{\it Stability of standing waves for nonlinear Schr\"{o}dinger equations with potentials}, 
Differential Integral Equations {\bf 16} (2003), 111--128. 

\bibitem{FO2}
R.~Fukuizumi and M.~Ohta, 
{\it Instability of standing waves for nonlinear Schr\"{o}dinger equations with potentials}, 
Differential Integral Equations {\bf 16} (2003), 691--706. 

\bibitem{HO1}
M.~Hirose and M.~Ohta,  
{\it Structure of positive radial solutions to scalar field equations with harmonic potential}, 
J. Differential Equations {\bf 178} (2002), 519--540.

\bibitem{HO2}
M.~Hirose and M.~Ohta,  
{\it Uniqueness of positive solutions to scalar field equations with harmonic potential}, 
Funkcial. Ekvac. 
{\bf 50} (2007), 67--100.

\bibitem{Kwong}
M.~K.~Kwong, 
{\it Uniqueness of positive solutions of $\Delta u-u+u^p=0$ in $\mathbf{R}^n$}, 
Arch. Rational Mech. Anal. 
{\bf 105} (1989), 243--266. 

\bibitem{lec}
S.~Le Coz, 
{\it A note on Berestycki-Cazenave's classical instability result 
for nonlinear Schr\"{o}dinger equations}, 
Adv. Nonlinear Stud. {\bf 8} (2008), 455--463. 

\bibitem{Lieb} E.~H.~Lieb, 
{\it On the lowest eigenvalue of the Laplacian 
for the intersection of two domains}, 
Invent. Math. {\bf 74} (1983), 441--448. 

\bibitem{mar}
Y.~Martel, 
{\it Blow-up for the nonlinear Schr\"{o}dinger equation in nonisotropic spaces}, 
Nonlinear Anal. {\bf 28} (1997), 1903--1908. 

\bibitem{oht16}
M.~Ohta,
{\it Strong instability of standing waves for nonlinear Schr\"{o}dinger equations 
with harmonic potential},
Funkcial. Ekvac. (to appear). 
Preprint, arXiv:1604.06957. 

\bibitem{OY1}M.~Ohta and T.~Yamaguchi, 
{\it Strong instability of standing waves for nonlinear Schr\"{o}dinger equations 
with double power nonlinearity}, 
SUT J. Math. {\bf 51} (2015), 49--58. 

\bibitem{OY2}M.~Ohta and T.~Yamaguchi, 
{\it Strong instability of standing waves for nonlinear Schr\"{o}dinger equations 
with a delta potential},  
in \lq\lq Harmonic analysis and nonlinear partial differential equations", 79--92, 
RIMS K\^oky\^uroku Bessatsu, B56, Res. Inst. Math. Sci. (RIMS), Kyoto, 2016. 

\bibitem{TTV}S.~Terracini, N.~Tzvetkov and N.~Visciglia, 
{\it The nonlinear Schr\"{o}dinger equation ground states on product spaces}, 
Anal. PDE {\bf 7} (2014), 73--96. 

\bibitem{zhang02}
J.~Zhang, 
{\it Cross-constrained variational problem and nonlinear Schr\"{o}dinger equation}, 
in \lq\lq Foundations of computational mathematics" (Hong Kong, 2000), 457--469, 
World Sci. Publ., River Edge, NJ, 2002.

\end{thebibliography}
\end{document}